\newcommand{\arxiv}[1]{\href{http://arxiv.org/abs/#1}{\texttt{arXiv:#1}}}
\def \Bzero{{0}}
\renewcommand{\l}[1] {
l^{J}_{#1}
}
\def \A{\mathcal{A}}
\def \reals{\mathbb{R}}
\newcommand{\g}{\mathfrak{g}}
\newcommand{\clfw}{\omega} 
\newcommand{\inner}[2]{\left\langle #1, #2 \right\rangle}
\newcommand{\iso}{\cong}
\newcommand{\sgn}{\operatorname{sgn}}
\newcommand{\virtual}[1]{\widehat{#1}}
\DeclareMathOperator{\wt}{wt} 
\DeclareMathOperator{\im}{Im} 
\DeclareMathOperator{\cl}{cl} 
\newcommand{\ZZ}{\mathbb{Z}}
\newcommand{\bon}{\overline{1}}
\definecolor{darkred}{rgb}{0.7,0,0} 
\newcommand{\defn}[1]{{\color{darkred}\emph{#1}}} 
\theoremstyle{plain}
\newtheorem{thm}{Theorem}[section]
\newtheorem{lemma}[thm]{Lemma}
\newtheorem{conj}[thm]{Conjecture}
\newtheorem{prop}[thm]{Proposition}
\newtheorem{cor}[thm]{Corollary}
\theoremstyle{definition}
\newtheorem{ex}[thm]{Example}
\newtheorem{remark}[thm]{Remark}
\newtheorem{problem}[thm]{Problem}
\newtheorem{remarks}[thm]{Remarks}
\numberwithin{equation}{section}
\begin{document}
\title[On higher level KR crystals]{On higher level Kirillov--Reshetikhin crystals,\\ Demazure crystals, and related uniform models}

\author[C.~Lenart]{Cristian Lenart}
\address[C. Lenart]{Department of Mathematics, State University of New York at Albany, Albany, NY 12222, USA}
\email{clenart@albany.edu}
\urladdr{https://www.albany.edu/~lenart/}

\author[T.~Scrimshaw]{Travis Scrimshaw}
\address[T. Scrimshaw]{School of Mathematics and Physics, The University of Queensland, St.\ Lucia, QLD 4072, Australia}
\email{tcscrims@gmail.com}
\urladdr{https://people.smp.uq.edu.au/TravisScrimshaw/}

\keywords{Kirillov--Reshetikhin crystal, Demazure crystal, quantum alcove model}
\subjclass[2010]{17B37, 05E10}

\begin{abstract}
We show that a tensor product of nonexceptional type Kirillov--Reshetikhin (KR) crystals is isomorphic to a direct sum of Demazure crystals; we do this  in the mixed level case and without the perfectness assumption, thus generalizing a result of Naoi. We use this result to show that, given two tensor products of such KR crystals with the same maximal weight, after removing certain $0$-arrows, the two connected components containing the minimal/maximal elements are isomorphic. Based on the latter fact, we reduce a tensor product of higher level perfect KR crystals to one of single-column KR crystals, which allows us to use the uniform models available in the literature in the latter case. We also use our results to give a combinatorial interpretation of the $Q$-system relations. Our results are conjectured to extend to the exceptional types.
\end{abstract}

\maketitle


\section{Introduction}
\label{sec:introduction}

\emph{Kirillov--Reshetikhin} (KR) \emph{modules} are a family of finite-dimensional representations of an affine quantum group without derivation $U_q'(\g)$ that are characterized by their Drinfel'd polynomials. They have been the subject of intense study, with numerous applications and properties, some of which are still conjectural to various extents. For example, see~\cite{CP95,CP98,FL06,FL07,FOS09,FOS10,Hernandez10,KKMMNN92,KNS11,LNSSS14, LNSSS14II,LNSSS15,LS17,Naoi13,OS08,OSS17,ST12} and the references therein. One of the most important conjectural properties~\cite{HKOTY99, HKOTT02} is that KR modules admit \emph{crystal bases} in the sense of Kashiwara~\cite{K90,K91,K94}. These crystals are called \emph{Kirillov--Reshetikhin} (KR) \emph{crystals} and are denoted $B^{r,s}$, where $r$ is an index of the classical Dynkin diagram of $\g$ and $s \in \ZZ_{>0}$. KR crystals have been shown to exist in all nonexceptional types by Okado and Schilling~\cite{OS08}, in types $G_2^{(1)}$ and $D_4^{(3)}$ by Naoi~\cite{Naoi17}, for certain $r$ in exceptional types~\cite{BS19II,JS10,Naoi19}, and for $r=1$ in all types by Kashiwara~\cite{K02}.

KR crystals and their tensor products are known to be connected with \emph{Demazure crystals} of affine highest weight representations. A precise description is known for a tensor product of KR crystals in nonexceptional types such that they are all \emph{perfect} of the same level~\cite{FSS07,KKMMNN91,ST12}; namely, this tensor product is isomorphic, up to certain 0-arrows, to a specified Demazure crystal. When the (nonexceptional type) KR crystals in the tensor product are perfect of mixed levels, Naoi showed that one obtains a direct sum of Demazure crystals~\cite{Naoi13}. In addition, this relationship was given for $B^{r,1}$ in all types~\cite{K05} and can be extended to tensor products by the techniques of~\cite{Naoi13} (see also~\cite{Naoi12} for an alternative proof). Further connections of KR crystals, viewed as classical crystals, were described in~\cite{FL06,FL07}.

One important unsolved problem involving KR crystals $B^{r,s}$ (and their tensor products) is constructing a uniform model in all types. For $B^{r,1}$ and their tensor products, such a model, based on \emph{projected level-zero Lakshmibai--Seshadri} (LS) \emph{paths}, was given by Naito and Sagaki~\cite{NS08II,NS08}. In untwisted types, an explicit description of these piecewise-linear paths was given as \emph{quantum Lakshmibai--Seshadri} (LS) \emph{paths} in~\cite{LNSSS14II}; the alternative \emph{quantum alcove model} was given in the same paper (see also~\cite{LL15,LL18}), while the quantum LS paths for type $A_{2n}^{(2)}$ were developed in~\cite{Nom16}. A partially uniform model for $B^{r,1}$ using \emph{Nakajima monomials} was given by Hernandez and Nakajima~\cite{HN06}. A uniform model for the classical crystal structure of $B^{r,s}$ was given in terms of \emph{rigged configurations} in~\cite{Kleber98,OSS03II,S06,SchillingS15}. However, the affine crystal structure on rigged configurations has currently only been explicitly constructed for type $A_n^{(1)}$~\cite{SW10}, $D_n^{(1)}$~\cite{OSS13}, and $A_{2n-1}^{(2)}$, $B_n^{(1)}$~\cite{SchillingS15}. Also in the nonexceptional case, a type-specific construction, based on virtual crystals and tableaux, is found in~\cite{FOS09}.

One goal of this paper is to construct the perfect KR crystals $B^{r,s}$ of nonexceptional type, as well as their tensor products of a fixed level, up to certain $0$-arrows called \emph{non-level $\ell$ Demazure arrows}, respectively \emph{non-level $\ell$ dual Demazure arrows}. This construction is done by identifying them with specific subcrystals of certain tensor products of single-column KR crystals $B^{r,1}$. Then, for the latter, we can use the uniform models mentioned above, \textit{i.e.}, quantum LS paths and the quantum alcove model. We focus on the latter, as it is purely combinatorial and easier to use. Furthermore, we are currently working on a very explicit combinatorial description of the mentioned subcrystals.

The paper also achieves several other goals as follows. First, we derive as our main tool a generalization of Naoi's result mentioned above~\cite{Naoi13} to the nonperfect case. Secondly, our reduction theorem used to construct $B^{r,s}$ in terms of $B^{r,1}$ is proved in much larger generality, as an identification between two tensor products of KR crystals of mixed levels (again, of nonexceptional type, and possibly nonperfect). Thirdly, another special case of this relationship is shown to realize combinatorially a part of the $Q$-system relations, which are satisfied by the classical characters of KR crystals~\cite{Hernandez10}; we are led to a conjecture about a combinatorial realization of the entire $Q$-system relations. 

We conjecture that our results extend to the exceptional types. In particular, this would immediately lead to a uniform model for all (level $\ell$ dual Demazure portions of) tensor products of perfect KR crystals with a fixed level. Other problems are stated as well.

Let us describe our results in more detail. For a tensor product of KR crystals $B$, there exists a unique (classical) weight $\lambda$, called the maximal weight, and a unique element of weight $w_0(\lambda)$, called the minimal element; here $w_0$ is the longest element of the corresponding finite Weyl group.
We say that a $0$-arrow is a non-level $\ell$ Demazure arrow if it is one of the first $\ell$ $0$-arrows in its $0$-string. Given two tensor products of KR crystals $B$ and $B'$ with the same maximal weight $\lambda$ and of level bounded by $\ell$, our main theorem states that, after removing all non-level $\ell$ Demazure arrows, the connected components containing the corresponding minimal elements are isomorphic. A contragredient dual version of this result also holds.

The main tool in proving our construction is showing for a tensor product of KR crystals $B$ that $B \otimes u_{\ell \Lambda_0}$ (this tensor product is equivalent to removing all the non-level $\ell$ Demazure arrows) is isomorphic to a direct sum of Demazure crystals; this is the mentioned generalization of Naoi's result. As a consequence, we show that all tensor products of KR crystals are isomorphic to some direct sum of Demazure crystals. We note that our results do not imply that $B^{r,s}$, when perfect of level $\ell$, is isomorphic to a single Demazure crystal; however, this does follow as a consequence when the two crystals have the same classical characters, like in the cases discussed in~\cite{FL06}.

This paper is organized as follows.
In Section~\ref{sec:background}, we provide the necessary background.
In Section~\ref{sec:results}, we prove our main results.
In Section~\ref{models} we describe the reduction to single-column KR crystals and explain the way in which the quantum alcove model applies to higher level KR crystals.
In Section~\ref{sec:Q_system}, we refer to the $Q$-system relations and the mentioned conjecture involving them.

\subsection*{Acknowledgments}

C.L.\ was partially supported by the NSF grant DMS--1362627 and the Simons Foundation grant \#584738. T.S.\ was partially supported by the Australian Research Council grant DP170102648. C.L.\ gratefully acknowledges the hospitality of Institut des Hautes \'Etudes Scientifiques (France), where part of this work was carried out. T.S.\ thanks SUNY Albany for its hospitality during his visit in January 2018. The authors thank the referee for useful comments. This work benefited from computations using \textsc{SageMath}~\cite{sage,combinat}.

\section{Background}
\label{sec:background}

Let $\g$ be an affine Kac--Moody Lie algebra with index set $I$, Cartan matrix $(A_{ij})_{i,j \in I}$, simple roots $(\alpha_i)_{i \in I}$, fundamental weights $(\Lambda_i)_{i \in I}$, weight lattice $P$, simple coroots $(\alpha_i^{\vee})_{i \in I}$, and canonical pairing $\langle\ ,\ \rangle \colon P^{\vee} \times P \to \ZZ$ given by $\inner{\alpha_i^{\vee}}{\alpha_j} = A_{ij}$.
We write $i \sim j$ if $A_{ij} \neq 0$ and $i \neq j$.
Let $U_q(\g)$ denote the corresponding (Drinfel'd--Jimbo) quantum group.
Define $c_i^{\vee} := \max(a_i^{\vee}/a_i, 1)$, where $a_i$ and $a_i^{\vee}$ are the Kac and dual Kac labels, respectively~\cite[Table Aff1-3]{kac90}.
Let $P^+$ and $P^-$ denote the positive and negative weight lattices, respectively. We denote by $P_\ell^+$ the dominant weights of level $\ell$. 
Let $Q$ be the root lattice, with $Q^+$ and $Q^-$ being the positive and negative root lattices, respectively.
Let $W$ be the Weyl group corresponding to $\g$. The (strong) Bruhat order on $W$ has covers $w\lessdot ws_\alpha$ with $\ell(ws_\alpha)=\ell(w)+1$, where $\ell(\,\cdot\,)$ denotes the length function. 

The extended affine Weyl group is $\widetilde{W} := W \rtimes \Pi \iso W_0 \ltimes P_0$, where $\Pi$ is the set of length $0$ elements (in $\widetilde{W}$) and corresponds to automorphisms of the Dynkin diagram of $\g$. Let $t_{\mu} \in \widetilde{W}$ be the translation by $\mu \in P_0$. See, \textit{e.g.},~\cite{Bourbaki02,Carter05,kac90} for more information on the extended affine Weyl group.

Let $U_q'(\g) := U_q([\g, \g])$. Note that the corresponding weight lattice is $P' := P / \ZZ\delta$, where $\delta = \sum_{i \in I} c_i \alpha_i$ is the null root; in particular, the simple roots in $P'$ are linearly dependent.
We will sometimes abuse notation and write $P$ instead of $P'$ when there is no danger of confusion.

Let $\g_0$ denote the canonical simple Lie algebra given by the index set $I_0 = I \setminus \{0\}$, and $U_q(\g_0)$ the corresponding quantum group. Let $\cl \colon P \to P_0$ denote the natural classical projection onto the weight lattice $P_0$ of $\g_0$.
Let $\clfw_i := \cl(\Lambda_i)$ be the classical projection of the fundamental weight $\Lambda_i$.
Let $Q_0$ and $W_0$ be the root lattice and Weyl group of $\g_0$, respectively. As usual, we denote by $w_0$ the longest element of $W_0$. 

The \defn{quantum Bruhat graph}~\cite{FW04} is the directed graph on $W_0$ with edges labeled by positive roots of $\g_0$
\begin{equation}
	w \xrightarrow[\hspace{25pt}]{\alpha}
	ws_{\alpha} \quad 
	\text{ for  } 
	w \lessdot ws_{\alpha} \,\mbox{ or }\, \ell(ws_{\alpha}) =  \ell(w) - 2 \langle\rho,\alpha^\vee\rangle   + 1\,;
	\label{eqn:qbruhat_edge}
\end{equation}
here $\rho$ denotes, as usual, half the sum of the positive roots of $\g_0$.

\subsection{Crystals}\label{sec:crystals}

An \defn{abstract $U_q(\g)$-crystal} is a set $B$ endowed with \defn{crystal operators} $e_i, f_i \colon B \to B \sqcup \{0\}$, for $i \in I$, and \defn{weight function} $\wt \colon B \to P$ that satisfy the following conditions:
\begin{itemize}
\item[(1)] $\varphi_i(b) = \varepsilon_i(b) + \inner{\alpha_i^{\vee}}{\wt(b)}$, for all $b \in B$ and $i \in I$,
\item[(2)] $f_i b = b'$ if and only if $b = e_i b'$, for $b, b' \in B$ and $i \in I$,
\item[(3)] $\wt(f_i b) = \wt(b) - \alpha_i$ if $f_i b \neq 0$;
\end{itemize}
where the statistics $\varepsilon_i, \varphi_i \colon  B \to \ZZ_{\geq 0}$ are defined by
\[
\varepsilon_i(b) := \max \{ k \mid e_i^k b \neq 0 \}\,,
\qquad \qquad \varphi_i(b) := \max \{ k \mid f_i^k b \neq 0 \}\,.
\]

\begin{remark}
The definition of an abstract crystal given in this paper is sometimes called a \defn{regular} or \defn{seminormal} abstract crystal in the literature.
\end{remark}

Let $e_i^{\max} b := e_i^{\varepsilon_i(b)} b$ and $f_i^{\max} b := f_i^{\varphi_i(b)} b$.
From the axioms, we identify $B$ with an $I$-edge colored weighted directed graph,
where there is an $i$-colored edge $b \to b'$ in the graph if and only if $f_i b = b'$.
Thus an entire $i$-string through an element $b \in B$ is given diagrammatically by
\[
e_i^{\max}b \xrightarrow[\hspace{15pt}]{i}
\cdots \xrightarrow[\hspace{15pt}]{i}
e_i^2 b \xrightarrow[\hspace{15pt}]{i}
e_i b \xrightarrow[\hspace{15pt}]{i}
b \xrightarrow[\hspace{15pt}]{i}
f_i b \xrightarrow[\hspace{15pt}]{i}
f_i^2 b \xrightarrow[\hspace{15pt}]{i}
\cdots \xrightarrow[\hspace{15pt}]{i}
f_i^{\max} b.
\]
An element $b \in B$ is \defn{highest (resp.\ lowest) weight} if $e_i b = 0$ (resp.~$f_i b = 0$) for all $i \in I$.
We say that $b \in B$ is \defn{classically highest (resp.\ lowest) weight} if $e_i b = 0$ (resp.~$f_i b = 0$) for all $i \in I_0$.

We define the \defn{tensor product} of abstract $U_q(\g)$-crystals $B_1$ and $B_2$ as the crystal $B_2 \otimes B_1$ that is the Cartesian product $B_2 \times B_1$ with the following crystal structure:
\begin{align*}
e_i(b_2 \otimes b_1) & := \begin{cases}
e_i b_2 \otimes b_1 & \text{if } \varepsilon_i(b_2) > \varphi_i(b_1)\,, \\
b_2 \otimes e_i b_1 & \text{if } \varepsilon_i(b_2) \leq \varphi_i(b_1)\,,
\end{cases}
\\ f_i(b_2 \otimes b_1) & := \begin{cases}
f_i b_2 \otimes b_1 & \text{if } \varepsilon_i(b_2) \geq \varphi_i(b_1)\,, \\
b_2 \otimes f_i b_1 & \text{if } \varepsilon_i(b_2) < \varphi_i(b_1)\,,
\end{cases}
\\ \varepsilon_i(b_2 \otimes b_1) & := \max(\varepsilon_i(b_1), \varepsilon_i(b_2) - \inner{\alpha_i^{\vee}}{\wt(b_1)})\,,
\\ \varphi_i(b_2 \otimes b_1) & := \max(\varphi_i(b_2), \varphi_i(b_1) + \inner{\alpha_i^{\vee}}{\wt(b_2)})\,,
\\ \wt(b_2 \otimes b_1) & := \wt(b_2) + \wt(b_1)\,.
\end{align*}

\begin{remark}
Our tensor product convention follows~\cite{BS17}, which is opposite to that of Kashiwara~\cite{K91}.
\end{remark}

For abstract $U_q(\g)$-crystals $B_1, \dotsc, B_L$, the action of the crystal operators on the tensor product $B := B_L \otimes \cdots \otimes B_2 \otimes B_1$ can be computed by the \defn{signature rule}.
Let $b := b_L \otimes \cdots \otimes b_2 \otimes b_1 \in B$, and for $i \in I$, we write
\[
\underbrace{-\cdots-}_{\varphi_i(b_L)}\ 
\underbrace{+\cdots+}_{\varepsilon_i(b_L)}\ 
\cdots\ 
\underbrace{-\cdots-}_{\varphi_i(b_1)}\ 
\underbrace{+\cdots+}_{\varepsilon_i(b_1)}\,.
\]
Then by successively deleting consecutive $+-$-pairs (in that order) in the above sequence, we obtain a sequence
\[
\sgn_i(b) :=
\underbrace{-\cdots-}_{\varphi_i(b)}\
\underbrace{+\cdots+}_{\varepsilon_i(b)}\,,
\]
called the \defn{reduced signature}.
Suppose $1 \leq j_-\leq j_+ \leq L$ are such that $b_{j_-}$ contributes the rightmost $-$ in $\sgn_i(b)$ and $b_{j_+}$ contributes the leftmost $+$ in $\sgn_i(b)$. 
Then, we have
\begin{align*}
e_i b &:= b_L \otimes \cdots \otimes b_{j_++1} \otimes e_ib_{j_+} \otimes b_{j_+-1} \otimes \cdots \otimes b_1\,, \\
f_i b &:= b_L \otimes \cdots \otimes b_{j_-+1} \otimes f_ib_{j_-} \otimes b_{j_--1} \otimes \cdots \otimes b_1\,.
\end{align*}

Let $B_1$ and $B_2$ be two abstract $U_q(\g)$-crystals.
A \defn{crystal morphism} $\psi \colon B_1 \to B_2$ is a map $B_1 \sqcup \{0\} \to B_2 \sqcup \{0\}$ with $\psi(0) = 0$, such that the following properties hold for all $b \in B_1$ and $i \in I$:
\begin{itemize}
\item[(1)] if $\psi(b) \in B_2$, then $\wt\bigl(\psi(b)\bigr) = \wt(b)$, $\varepsilon_i\bigl(\psi(b)\bigr) = \varepsilon_i(b)$, and $\varphi_i\bigl(\psi(b)\bigr) = \varphi_i(b)\,;$
\item[(2)] we have $\psi(e_i b) = e_i \psi(b)$ if $\psi(e_i b) \neq 0$ and $e_i \psi(b) \neq 0\,;$
\item[(3)] we have $\psi(f_i b) = f_i \psi(b)$ if $\psi(f_i b) \neq 0$ and $f_i \psi(b) \neq 0\,.$
\end{itemize}
An \defn{embedding} (resp.~\defn{isomorphism}) is a crystal morphism such that the induced map $B_1 \sqcup \{0\} \to B_2 \sqcup \{0\}$ is an embedding (resp.~bijection).
A crystal morphism is \defn{strict} if it commutes with all crystal operators.
Note that for a strict crystal embedding $\psi \colon B \to B'$ and connected components $C \subseteq B$ and $C' \subseteq B'$ such that $\psi(c) \in C'$ for any $c \in C$, the restriction $\widehat{\psi} \colon C \to C'$ is an isomorphism, i.e., connected components go to connected components under $\psi$.

A \defn{similarity map}~\cite{K96} is an embedding of crystals $\sigma=\sigma_m \colon B \to \virtual{B}$, with $m\in{\mathbb Z}_{>0}$, that satisfies
\begin{equation}\label{def:sim}
e_i \mapsto e_i^{m}\,,
\quad
f_i \mapsto f_i^{m}\,,\quad
 \varepsilon_i\bigl( \sigma(b) \bigr)=m \varepsilon_i(b)\,,
\quad
 \varphi_i\bigl( \sigma(b) \bigr)=m \varphi_i(b)\,,\quad
\wt(\sigma(b)) = m \wt(b)\,.
\end{equation}

An abstract crystal $B$ is a \defn{$U_q(\g)$-crystal} if $B$ is the crystal basis of some $U_q(\g)$-module.
Kashiwara~\cite{K91} has shown that the irreducible highest (resp. lowest) weight module $V(\lambda)$, for $\lambda \in P^+$ (resp.~$\lambda \in P^-$), admits a crystal basis, denoted $B(\lambda)$; this has a unique highest (resp.\ lowest) weight element $u_{\lambda}$ such that $\wt(u_{\lambda}) = \lambda$. The elements $u_{w\lambda} := w u_{\lambda}$, for $w\in W$, are called \defn{extremal}; here we used the $W$-action on the crystal, which was defined by Kashiwara~\cite{K94} as follows:
\[
s_i b := \begin{cases}
f_i^{\langle \alpha_i^{\vee}, \wt(b) \rangle} b & \text{if } \langle \alpha_i^{\vee}, \wt(b) \rangle > 0\,, \\ 
e_i^{-\langle \alpha_i^{\vee}, \wt(b) \rangle} b & \text{if } \langle \alpha_i^{\vee}, \wt(b) \rangle \leq 0\,.
\end{cases}
\]

\subsection{Demazure crystals}

Let $\lambda \in P^+$. A Demazure module is a $U_q^+(\g)$-module generated by an extremal weight vector of weight $w \lambda \in V(\lambda)$. Kashiwara showed that the Demazure module has a crystal basis that is compatible with the crystal basis $B(\lambda)$ of the corresponding highest weight module $V(\lambda)$.

Hence, we can construct the crystal of a Demazure module as a subcrystal of $B(\lambda)$.
Fix a reduced expression $w = s_{i_1} s_{i_2} \cdots s_{i_{\ell}}$. A \defn{Demazure crystal} of the highest weight crystal $B(\lambda)$ is the full subcrystal given by
\begin{equation}\label{def:dem}
B_w(\lambda) := \{ b \in B(\lambda) \mid e_{i_1}^{\max} e_{i_2}^{\max} \cdots e_{i_{\ell}}^{\max} b = u_{\lambda} \}\,.
\end{equation}

\begin{thm}[Combinatorial excellent filtration~{\cite{Joseph03,LLM02}}]
\label{thm:excellent_filtration}
For all $\lambda, \mu \in P^+$, the crystal $B_w(\mu) \otimes u_{\lambda}$ is a direct sum of Demazure crystals.
\end{thm}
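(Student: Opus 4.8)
The plan is to prove the crystal statement directly, by induction on $\ell(w)$, using the recursive structure of Demazure crystals together with a careful analysis of how tensoring by $u_\lambda$ interacts with the Demazure root operators. For a subset $X$ of a crystal and $i \in I$ I write $\mathfrak{F}_i X := \{f_i^k x : x \in X,\ k \geq 0\} \setminus \{0\}$, and I will freely use that for a reduced word $w = s_{i_1}\cdots s_{i_\ell}$ one has $B_w(\mu) = \mathfrak{F}_{i_1}\cdots\mathfrak{F}_{i_\ell}\{u_\mu\}$ independently of the reduced word, that every Demazure crystal $B_{v}(\nu)$ is stable under all $e_i$ (Kashiwara), that $\mathfrak{F}_i B_v(\nu) = B_{s_iv}(\nu)$ when $\ell(s_iv) = \ell(v)+1$ and $\mathfrak{F}_i B_v(\nu) = B_v(\nu)$ otherwise, and that (combining the last two) each $i$-string of $B_v(\nu)$ is either a full $i$-string of $B(\nu)$ or a single $i$-highest-weight vertex of $B(\nu)$.

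First I would make the crystal $B_w(\mu)\otimes u_\lambda$ explicit. It has underlying set $\{b\otimes u_\lambda : b\in B_w(\mu)\}$, and since $\varepsilon_i(u_\lambda)=0$ and $\varphi_i(u_\lambda)=\langle\alpha_i^\vee,\lambda\rangle$ the tensor product rule degenerates to $e_i(b\otimes u_\lambda) = e_i b\otimes u_\lambda$ exactly when $\varepsilon_i(b)>\langle\alpha_i^\vee,\lambda\rangle$, and $f_i(b\otimes u_\lambda) = f_i b\otimes u_\lambda$ exactly when $\varepsilon_i(b)\ge\langle\alpha_i^\vee,\lambda\rangle$ and $f_i b\in B_w(\mu)$ (these maps being $0$ otherwise). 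In other words $(-)\otimes u_\lambda$ keeps each $i$-string of $B_w(\mu)$ but deletes its top $\langle\alpha_i^\vee,\lambda\rangle$ arrows, turning the deleted vertices into $i$-singletons and leaving the remaining lower segment as a new $i$-string (one checks directly that all of $\wt,\varepsilon_i,\varphi_i$ on that new segment are the ones of an honest full $i$-string). The base case $w=e$ is then immediate: $B_e(\mu)\otimes u_\lambda = \{u_\mu\otimes u_\lambda\}$ is a copy of the one-element Demazure crystal $B_e(\mu+\lambda)$, since $\wt(u_\mu\otimes u_\lambda)=\mu+\lambda$ and $\varepsilon_i(u_\mu\otimes u_\lambda)=0$ for all $i$.

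For the inductive step I would write $w = s_i v$ with $\ell(w) = \ell(v)+1$, so that $B_w(\mu) = \mathfrak{F}_i B_v(\mu)$, and assume by induction that $B_v(\mu)\otimes u_\lambda = \bigsqcup_j D_j$ with each $D_j$ isomorphic to a Demazure crystal, realized as $B_{v_j}(\nu_j)\subseteq B(\nu_j)$. Passing from $B_v(\mu)$ to $\mathfrak{F}_i B_v(\mu)$ replaces each singleton $i$-string of $B_v(\mu)$ by the full $i$-string of $B(\mu)$ below it and leaves the other $i$-strings unchanged, so, combined with the truncation above, $B_w(\mu)\otimes u_\lambda$ differs from $\bigsqcup_j D_j$ only along the $i$-strings: each surviving lower $i$-segment is extended downward, and the new bottom vertices are adjoined, as fresh $i$-singletons if they lie in the deleted top region $\langle\alpha_i^\vee,\lambda\rangle$ and as a continuation of a surviving segment otherwise. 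I would then show that, carried out componentwise, this operation again produces a disjoint union of Demazure crystals — the pieces being $B_{\max(v_j,\,s_iv_j)}(\nu_j)$ together with the one-element Demazure crystals split off by the truncation — using the structure theorem recalled above (in particular that $\varepsilon_i$ on $B_{v'}(\nu)$ is inherited from $B(\nu)$); iterating this step along a reduced word for $w$ then yields the claim.

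The step I expect to be the real obstacle is precisely this componentwise analysis: it requires case distinctions according to how $\langle\alpha_i^\vee,\lambda\rangle$ compares with the lengths of the various $i$-strings (when it is $0$ one simply replaces each summand $B_{v_j}(\nu_j)$ by $B_{\max(v_j,\,s_iv_j)}(\nu_j)$; when it is large the strings atomize and many one-element Demazure crystals appear; intermediate regimes mix the two), and one must check that the pieces produced never overlap, so that the decomposition stays disjoint, and that $\wt,\varepsilon_i,\varphi_i$ all match — the last point being genuine content, since Demazure crystals need not be seminormal. I would control the disjointness and the statistics by tracking everything inside the ambient crystal $B(\mu)\otimes B(\lambda)$ via the morphism $b\otimes u_\lambda\mapsto b\otimes u_\lambda$, and using that the decomposition $B(\mu)\otimes B(\lambda)\cong\bigsqcup_\nu B(\nu)^{\oplus m_\nu}$ into connected components can be chosen compatibly with the Demazure filtrations of the two tensor factors. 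An alternative that sidesteps the bookkeeping is to argue inside Littelmann's path model: there $B(\lambda)$ is the set of Lakshmibai--Seshadri paths of shape $\lambda$, the Demazure crystal $B_w(\mu)$ consists of the LS paths of shape $\mu$ whose initial direction is $\le w$, the tensor product is path concatenation, and the statement drops out of Littelmann's generalized Littlewood--Richardson rule; this is essentially how the result is established in \cite{Joseph03,LLM02}.
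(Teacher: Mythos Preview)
The paper does not prove Theorem~\ref{thm:excellent_filtration}; it is quoted from \cite{Joseph03,LLM02} and used as a black box. So there is no ``paper's own proof'' to compare against, and the question is simply whether your outline stands on its own.

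The inductive skeleton is the right one and the base case is fine, but the heart of the argument --- your ``componentwise analysis'' in the inductive step --- does not go through as sketched. You claim that passing from $B_v(\mu)\otimes u_\lambda=\bigsqcup_j D_j$ to $B_{s_iv}(\mu)\otimes u_\lambda$ produces the pieces $B_{\max(v_j,s_iv_j)}(\nu_j)$ together with some one-element Demazure crystals ``split off by the truncation''. Neither assertion is correct in general. First, the $i$-singletons created by the truncation are only $i$-highest; for $j\neq i$ they typically have $\varepsilon_j>0$, so they are \emph{not} one-element Demazure crystals $B_e(\nu)$ and must instead be absorbed into larger Demazure pieces via $j$-arrows --- this reattachment is exactly the content of the theorem and cannot be waved away. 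Second, the operation $B_v(\mu)\mapsto \mathfrak{F}_iB_v(\mu)$ followed by $\otimes\,u_\lambda$ is \emph{not} the same as applying $\mathfrak{F}_i$ (inside $B(\mu)\otimes B(\lambda)$) to $B_v(\mu)\otimes u_\lambda$: if an $i$-string of $B_v(\mu)$ has length $m<\langle\alpha_i^\vee,\lambda\rangle$, then after tensoring with $u_\lambda$ its elements lie in \emph{different} connected components of $B(\mu)\otimes B(\lambda)$, so the new tail $f_i^{m+1}b,\dots,f_i^Lb$ adjoined by $\mathfrak{F}_i$ does not sit in any single $B(\nu_j)$ and hence cannot be accounted for by replacing one $D_j$ with $\mathfrak{F}_iD_j$. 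In short, the decomposition $\bigsqcup_j D_j$ is not compatible with the $i$-string structure of $B(\mu)$, and your induction loses control of where the new vertices go.

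What actually makes the induction close is an additional structural input: Kashiwara's string property for Demazure crystals (each $i$-string of the ambient $B(\nu)$ meets a Demazure crystal in $\emptyset$, the top vertex, or the whole string) together with the nontrivial fact that this property is preserved under $\mathfrak{F}_j$ for \emph{every} $j$. With that in hand one can show directly that $B_w(\mu)\otimes u_\lambda$, viewed inside $B(\mu)\otimes B(\lambda)\cong\bigoplus B(\nu)$, again satisfies the string property, and hence is a union of Demazure crystals. Your final paragraph, pointing to the LS-path / standard monomial argument of \cite{LLM02,Joseph03}, is the route those references actually take; it is not merely a way to ``sidestep the bookkeeping'' but the place where the real work is done.
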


We also require the following fact, which follows from the definition of the Bruhat order on the Weyl group $W$.

\begin{prop}
\label{prop:bruhat_Demazure}
Let $\lambda \in P^+$.
We have $v \leq w$ if and only if $B_v(\lambda) \subseteq B_w(\lambda)$.
\end{prop}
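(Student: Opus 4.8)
The plan is to prove both implications by relating containment of Demazure crystals to the recursive structure in the definition \eqref{def:dem}. Recall $B_w(\lambda)$ is built from a reduced word $w = s_{i_1}\cdots s_{i_\ell}$, and the crucial point is the following well-known recursion: if $w' = s_i w$ with $\ell(w') = \ell(w)+1$, then $B_{w'}(\lambda) = \bigcup_{k\geq 0} f_i^k B_w(\lambda) \setminus\{0\}$, i.e.\ $B_{w'}(\lambda)$ is the union of the full $i$-strings through the elements of $B_w(\lambda)$. Equivalently, $b \in B_{w'}(\lambda)$ iff $e_i^{\max} b \in B_w(\lambda)$. I would take this recursion as the main engine (it is itself immediate from \eqref{def:dem} and independence of the Demazure crystal on the chosen reduced word, a fact of Kashiwara that is implicitly available here).

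For the direction $v \leq w \implies B_v(\lambda) \subseteq B_w(\lambda)$: by the subword characterization of Bruhat order, choose a reduced word $w = s_{i_1}\cdots s_{i_\ell}$ such that $v = s_{i_{j_1}}\cdots s_{i_{j_k}}$ is a reduced subword. Proceeding by induction on $\ell(w) - \ell(v)$, it suffices to treat a single cover $v \lessdot w$, say $w = v s_\alpha$ with $\ell(w) = \ell(v) + 1$; writing $w = v' s_i$ with $\ell(v') = \ell(w) - 1$ where $s_i$ is the last letter of a reduced word for $w$ chosen to extend one for $v$ — more carefully, since $v$ is obtained from $w$ by deleting one letter $s_{i_p}$, use the recursion $\ell - p$ times to climb from $B_v(\lambda)$ up to $B_w(\lambda)$, at each step passing from a Demazure crystal to the union of $i$-strings through it, which can only enlarge the set. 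Hence $B_v(\lambda) \subseteq B_w(\lambda)$. (One must be slightly careful that the intermediate subwords are reduced, but this is exactly guaranteed by working inside a fixed reduced word for $w$ and deleting letters one at a time, or one can simply invoke the chain property of Bruhat order to reduce to covers and handle a cover directly via the recursion.)

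For the converse $B_v(\lambda) \subseteq B_w(\lambda) \implies v \leq w$: I would argue by contrapositive. Suppose $v \not\leq w$. The goal is to exhibit $b \in B_v(\lambda)$ with $b \notin B_w(\lambda)$. Here the natural candidate is an extremal element: $u_{v\lambda} = v\, u_\lambda \in B_v(\lambda)$ (this membership follows by applying the $W$-action and the string recursion above, realizing $u_{v\lambda}$ as $f_{i_1}^{\max}\cdots$ applied appropriately along a reduced word of $v$). So it suffices to show $u_{v\lambda} \notin B_w(\lambda)$ when $v \not\leq w$. Equivalently, using $e_{i_1}^{\max}\cdots e_{i_\ell}^{\max} u_{v\lambda}$ for a reduced word of $w$: one shows this product does not return $u_\lambda$ unless $v \leq w$. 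This is the step I expect to be the main obstacle, since it requires the nontrivial fact that the extremal element $u_{v\lambda}$ lies in $B_w(\lambda)$ \emph{only if} $v \le w$ — intuitively clear because the Demazure crystal $B_w(\lambda)$ "reaches weight $v\lambda$" exactly when $v\lambda$ is a Bruhat-accessible extremal weight below $w$, but making it rigorous needs either a careful induction on $\ell(w)$ using the string recursion (tracking how weights in a given $W_0\lambda$- (really $W\lambda$-) orbit appear), or a citation to the standard fact that $\{v \in W : u_{v\lambda} \in B_w(\lambda)\} = \{v : v \leq w\}$ for regular $\lambda$ (and a reduction to the regular case, or a direct argument since $u_{v\lambda}$ depends only on the coset of $v$ in $W/W_\lambda$ and Bruhat order is compatible with minimal coset representatives).

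Assembling: the forward implication is a clean induction via the $i$-string recursion; the reverse implication reduces to identifying, for fixed $\lambda$, the set of $v$ with $u_{v\lambda} \in B_w(\lambda)$, which is the heart of the matter and where I would either cite Kashiwara's description of Demazure crystals or include a short induction on $\ell(w)$.
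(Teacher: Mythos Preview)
The paper does not actually prove this proposition: it is stated as a fact ``which follows from the definition of the Bruhat order on the Weyl group $W$'' and left at that. So there is no argument in the paper to compare against; your proposal is a fleshing-out of that one-line remark rather than an alternative to it.

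Your forward direction is correct and is presumably what the authors have in mind. Fixing a reduced word for $w$ that contains a reduced subword for $v$, and using that the string operator $\mathcal{F}_i\colon X\mapsto \bigcup_{k\geq 0} f_i^k X\setminus\{0\}$ is inflationary and inclusion-preserving, one gets $B_v(\lambda)\subseteq B_w(\lambda)$ immediately. (Your detour through Bruhat covers is unnecessary; the subword property alone suffices, since inserting extra $\mathcal{F}_i$'s into the string $\mathcal{F}_{i_1}\cdots\mathcal{F}_{i_\ell}\{u_\lambda\}$ can only enlarge the result.) For the converse you correctly single out the extremal element $u_{v\lambda}$ as the witness and flag that $u_{v\lambda}\in B_w(\lambda)\Rightarrow v\leq w$ needs either an induction on $\ell(w)$ or a citation; that is the right diagnosis. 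You also note, correctly, that for non-regular $\lambda$ one must pass to minimal coset representatives in $W/W_\lambda$, since otherwise the biconditional is literally false (take $\lambda=0$). The paper does not engage with this point, so your sketch is already more careful than the source.
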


\subsection{Kirillov--Reshetikhin crystals}
\label{sec:kr}

Let $B^{r,s}$ denote the \defn{Kirillov--Reshetikhin (KR) crystal}, where $r \in I_0$ and $s \in \ZZ_{>0}$. We refer to Section~\ref{sec:introduction} for a review of the cases when the existence of KR crystals was proved, as well as of the related combinatorial models. 

KR crystals have a number of conjectural properties. A KR crystal $B^{r,s}$ is conjectured~\cite{KKMMNN92} to be \defn{perfect}\footnote{The property of being perfect is a technical condition related to KR crystals, which is used to construct the Kyoto path model~\cite{KKMMNN92}; see also~\cite{BS17}.} of level $s / c_r$ if and only if $s / c_r \in \ZZ$. This has been shown for all nonexceptional types in~\cite{FOS10}, and in some special cases for other types~\cite{KMOY07,Yamane98}.
KR crystals are known to be well-behaved under similarity maps in nonexceptional types, as stated below.

\begin{thm}[{\cite{Okado13}}]
\label{thm:KR_similarity}
Let $\g$ be of nonexceptional affine type.
There exists a (unique) similarity map $\sigma_m \colon B^{r,s} \to B^{r,ms}$.
\end{thm}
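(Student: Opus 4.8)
The plan is to prove existence and uniqueness separately, uniqueness being essentially formal.

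\emph{Uniqueness.} A KR crystal $B^{r,s}$ is connected (it is the crystal of an irreducible $U_q'(\g)$-module), and by the multiplicity-one property of its top classical component it has a unique element $u$ of maximal classical weight $\lambda = \lambda_{B^{r,s}}$; in every nonexceptional type the combinatorial model gives $\lambda_{B^{r,ms}} = m\lambda$. A similarity map $\sigma_m$ must send $u$ to an element of $B^{r,ms}$ of weight $m\lambda$, and since $m\lambda$ is maximal in $B^{r,ms}$ with multiplicity one, $\sigma_m(u) = u_{B^{r,ms}}$ is forced. Writing an arbitrary $b$ as $x_{i_N} \cdots x_{i_1} u$ with $x_{i_j} \in \{e_{i_j}, f_{i_j}\}$ (possible by connectedness), the relations \eqref{def:sim} force $\sigma_m(b) = x_{i_N}^m \cdots x_{i_1}^m \, u_{B^{r,ms}}$, so $\sigma_m$ is unique if it exists.

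\emph{Existence; reduction to the simply-laced case.} First, similarity maps are compatible with tensor products: this is a short check from the signature rule, since all relevant $\varepsilon_i, \varphi_i$ scale by $m$, so the comparisons governing $\otimes$ are unchanged while $e_i^m$ and $f_i^m$ merely cross $m$ times as many tensor boundaries. Hence for $\g$ nonexceptional but not of type $A_n^{(1)}$ or $D_n^{(1)}$ I would use the virtual crystal realization of $B^{r,s}$ as a subcrystal $V$ of a tensor product $\widehat{B}$ of KR crystals of the ambient simply-laced type $\widehat{\g} \in \{A_n^{(1)}, D_n^{(1)}\}$. Granting the simply-laced case, one obtains an ambient similarity map $\widehat{\sigma}_m$ on $\widehat{B}$; since $\widehat{\sigma}_m$ scales every $\varepsilon_j$ and every weight uniformly by $m$, it preserves the defining conditions of a virtual crystal (equality of $\varepsilon_j$ along folding orbits; weight in the relevant sublattice), so it carries $V$ into the virtual copy $V'$ of $B^{r,ms}$. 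Transporting $\widehat{\sigma}_m|_V$ through the virtualization isomorphisms $B^{r,s} \cong V$ and $B^{r,ms} \cong V'$ yields a map which, because the virtual operators $\widehat{e}_i^{\mathrm{virt}} = \prod_j \widehat{e}_j^{\gamma_i}$ have commuting factors, satisfies $e_i \mapsto e_i^m$ and the required scalings (the virtualization constants $\gamma_i$ and the factor $m$ commute). This reduces everything to types $A_n^{(1)}$ and $D_n^{(1)}$.

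\emph{The simply-laced base case, and the main difficulty.} Here $B^{r,s}$ is realized by rectangular Kashiwara--Nakashima-type tableaux with an explicit combinatorial description of $e_0, f_0$, and I would take $\sigma_m$ to be column-duplication: replace each column of a tableau by $m$ adjacent copies of itself. This is patently injective and multiplies the weight by $m$; for $i \in I_0$ it commutes with $e_i, f_i$ because the reading word of $\sigma_m(T)$ is obtained from that of $T$ by repeating each (strictly increasing) column word $m$ times, and a short computation with the signature counter shows that the reduced $i$-signature, hence $\varepsilon_i$ and $\varphi_i$, scales by $m$, while $e_i^m$ and $f_i^m$ act within the $m$-tuple of the column on which $e_i, f_i$ act in $T$. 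The genuinely non-formal step is the compatibility with $e_0, f_0$: in type $A_n^{(1)}$ one uses $f_0 = \pr^{-1} f_1\, \pr$ and must check that column-duplication commutes with the promotion operator $\pr$ (both composites being rigid maps pinned down by a reference element and the twisted intertwining relations), reducing $i=0$ to $i=1$; in type $D_n^{(1)}$ one uses the $\pm$-diagram rule for $e_0, f_0$, which only edits the extreme columns in a manner unaffected by inserting repeated columns. I expect the promotion-compatibility check in type $A$ (and the analogous boundary analysis in type $D$) to be the main obstacle — which is exactly why the non-simply-laced cases are routed through virtual crystals rather than handled directly, since $e_0$ admits no uniform combinatorial description. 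A last minor point to verify is $\lambda_{B^{r,ms}} = m\lambda_{B^{r,s}}$ in all nonexceptional types, which underpins both the uniqueness argument and the identification $\sigma_m(u_{B^{r,s}}) = u_{B^{r,ms}}$.
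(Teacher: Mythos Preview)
The paper does not give its own proof of this statement; it is quoted as a background result of Okado \cite{Okado13} and used as a black box. So there is no in-paper argument to compare against.

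Your overall architecture is reasonable and close in spirit to what Okado does (uniqueness is indeed formal from connectedness plus the multiplicity-one top component; existence is checked against the explicit combinatorial models of \cite{FOS09}). One difference worth flagging: Okado does \emph{not} reduce the non-simply-laced types to $A_n^{(1)}$ and $D_n^{(1)}$ via virtual crystals; he works type by type with the $\pm$-diagram description of the affine structure from \cite{FOS09}, specifying $\sigma_m$ on classically highest weight elements and then checking compatibility with $e_0$ directly. Your virtual-crystal route is a legitimate alternative idea, but it buys you less than it appears, because the hard case $D_n^{(1)}$ is already one of your ``base'' types.

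There is a genuine gap in that base case. For $1 \le r \le n-2$ in type $D_n^{(1)}$, the KR crystal $B^{r,s}$ is \emph{not} a single classical component $B(s\clfw_r)$: it decomposes as $\bigoplus_{\lambda} B(\lambda)$ over shapes obtained from the $s \times r$ rectangle by removing vertical dominoes. Your column-duplication map is only defined on the rectangular Kashiwara--Nakashima tableaux in the top component, so $\sigma_m$ is simply undefined on the rest of the crystal, and your description of $e_0,f_0$ as ``only editing the extreme columns'' does not match the actual $\pm$-diagram rule, which moves between classical components of different shapes. One repair is to pass to the KR-tableaux model of \cite{OSS13,SchillingS15}, where every element is a rectangular filling and column duplication is globally defined; another is to follow Okado and define $\sigma_m$ on $\pm$-diagrams (doubling each column of the outer/inner shapes and the $\pm$ pattern) and verify the $e_0$-intertwining from the explicit rules. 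A secondary point: in your virtual-crystal reduction you assert that $\widehat{\sigma}_m$ carries the virtual subcrystal $V \subset \widehat{B}$ realizing $B^{r,s}$ into the virtual subcrystal $V' \subset \widehat{B}'$ realizing $B^{r,ms}$, but $V$ is not characterized solely by the orbit and lattice conditions you list---it is a specific connected component (the ``aligned'' one)---so you must still argue that $\widehat{\sigma}_m$ lands in the correct component, e.g.\ by tracking the maximal element.
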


There exists a unique classical component $B(s\clfw_r) \subseteq B^{r,s}$, and for any other classical component $B(\lambda) \subseteq B^{r,s}$, we have $s\clfw_r - \lambda \in Q_0^+$.
We remark that $B^{r,s} \iso B(s\clfw_r)$ as $U_q(\g_0)$-crystals whenever $r$ is the image of $0$ for some Dynkin diagram automorphism.
Let $u_{\max}(B^{r,s}) := u_{r\clfw_s} \in B(s\clfw_r) \subseteq B^{r,s}$ denote the \defn{maximal element}. For general $B := \bigotimes_{j=1}^N B^{r_j, s_j}$, the maximal element is defined as $u_{\max}(B) := u_{\max}(B^{r_1,s_1}) \otimes \cdots \otimes u_{\max}(B^{r_N,s_N})$, and note that it is the unique element of classical weight $\sum_{j=1}^N s_j \clfw_{r_j}$. 
Similarly, let $u_{\min}(B)$ denote the \defn{minimal element} of $B$, which is the unique element of classical weight $w_0 \!\left(\sum_{j=1}^N s_j \clfw_{r_j}\right)$.

The key property we need is a relationship between tensor products of KR crystals and Demazure crystals of an affine highest weight crystal.

\begin{thm}[\cite{FSS07,KKMMNN91,ST12}]
\label{thm:demazure_embedding}
Let $\g$ be of nonexceptional type.
Let $B := \bigotimes_{j=1}^N B^{r_j,s_j}$ such that there exists $\ell\in\ZZ$ with $s_j / c_{r_j} = \ell$ for all $j$.
Let $\clfw := -(c_{r_1} \clfw_{r_1^*} + \cdots + c_{r_N} \clfw_{r_N^*})$, where $\clfw_{r^*} = -w_0(\clfw_r)$.
Then, there exists a crystal isomorphism
\[
\psi \colon B(\ell \Lambda_{\tau(0)}) \to B \otimes B(\ell \Lambda_0)
\]
given by  $u_{\ell \Lambda_{\tau(0)}} \mapsto u_{g} \otimes u_{\ell \Lambda_0}$, where $u_{g} = u_1 \otimes \cdots \otimes u_N$ is the \defn{ground state element}, the unique element of $B$ such that $u_N = u_{\max}(B^{r_N,s_N})$ and $\varepsilon_i(u_j) = \varphi_i(u_{j+1})$ for all $1 \leq j < N$ and $i \in I$.
Moreover, we have
\[
B_v(\ell \Lambda_{\tau(0)}) \iso B \otimes u_{\ell \Lambda_0}\,,
\]
where $v\tau = t_{\clfw}$ with $v \in W$ and $\tau \in \Pi$.
\end{thm}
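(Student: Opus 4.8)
The plan is to prove the two assertions in turn. The isomorphism $\psi$ is an instance of the Kyoto path model for perfect crystals, whereas identifying $B\otimes u_{\ell\Lambda_0}$ with a \emph{single} Demazure crystal, and computing the element $v$, is the substantive part; it rests on the fact that the energy function of $B$ records the affine degree inside $B(\ell\Lambda_{\tau(0)})$.

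For the isomorphism $\psi$, recall from \cite{FOS10} that each factor $B^{r_j,s_j}$ with $s_j/c_{r_j}=\ell$ is perfect of level $\ell$ in nonexceptional type, and that a tensor product of perfect crystals all of level $\ell$ is again perfect of level $\ell$. Using the tensor product rule and induction on $N$, one checks that the conditions $u_N=u_{\max}(B^{r_N,s_N})$ and $\varepsilon_i(u_j)=\varphi_i(u_{j+1})$ single out $u_g$ as the unique minimal element of $B$ with $\varepsilon(u_g)=\ell\Lambda_0$, equivalently as the unique element such that $u_g\otimes u_{\ell\Lambda_0}$ is $\g$-highest weight. Defining $\tau(0)\in I$ by $\varphi(u_g)=\ell\Lambda_{\tau(0)}$ (this extends to a Dynkin diagram automorphism $\tau$), one has $\wt(u_g\otimes u_{\ell\Lambda_0})=\varphi(u_g)-\varepsilon(u_g)+\ell\Lambda_0=\ell\Lambda_{\tau(0)}$, and the Kyoto path theorem \cite{KKMMNN92} then produces the unique crystal isomorphism $\psi\colon B(\ell\Lambda_{\tau(0)})\to B\otimes B(\ell\Lambda_0)$ sending $u_{\ell\Lambda_{\tau(0)}}$ to $u_g\otimes u_{\ell\Lambda_0}$.

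For the Demazure statement, put $D:=\psi^{-1}(B\otimes u_{\ell\Lambda_0})$. Since $u_{\ell\Lambda_0}$ is $\g$-highest weight, the signature rule gives $e_i(b\otimes u_{\ell\Lambda_0})\in(B\otimes u_{\ell\Lambda_0})\sqcup\{0\}$ for all $b\in B$ and $i\in I$, so $D$ is a finite connected subcrystal of $B(\ell\Lambda_{\tau(0)})$ that is stable under all the $e_i$ and contains $u_{\ell\Lambda_{\tau(0)}}$. This by itself does not force $D$ to be a Demazure crystal --- for example $\{u_\lambda,f_iu_\lambda\}$ is stable under $e_i$ but equals $B_{s_i}(\lambda)$ only when $\langle\alpha_i^\vee,\lambda\rangle=1$ --- so I would argue by induction on $N$. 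The base case $B=B^{r,s}$ is obtained from the explicit structure of the perfect KR crystal (e.g.\ its tableau realization in \cite{FOS09}) together with the Schilling--Tingley comparison \cite{ST12}: the depth reached by each $0$-string of $B^{r,s}\otimes u_{\ell\Lambda_0}$ is dictated by the energy function, which coincides (up to an overall shift) with the affine degree inside $B(\ell\Lambda_{\tau(0)})$, and this is exactly what certifies that $D$ is a Demazure crystal. For the inductive step one peels off one factor, say $B=B^{r_1,s_1}\otimes B'$ with $B'=\bigotimes_{j\ge 2}B^{r_j,s_j}$, uses the inductive hypothesis to write $B'\otimes u_{\ell\Lambda_0}\iso B_{v'}(\ell\Lambda_{\tau'(0)})$, and then shows that tensoring a Demazure crystal $B_{v'}(\ell\Lambda_{\tau'(0)})$ with the perfect crystal $B^{r_1,s_1}$ again produces a Demazure crystal of $B(\ell\Lambda_{\tau(0)})$ --- once more via the energy function --- with the combinatorial excellent filtration (Theorem~\ref{thm:excellent_filtration}) and Proposition~\ref{prop:bruhat_Demazure} organizing the intermediate direct-sum decompositions; the component through $u_{\ell\Lambda_{\tau(0)}}$, which is all of $D$, is then a single Demazure crystal $B_v(\ell\Lambda_{\tau(0)})$.

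Finally, to pin down $v$, one checks (alongside the induction) that the extremal element generating $B_v(\ell\Lambda_{\tau(0)})$ corresponds under $\psi$ to $u_{\min}(B)\otimes u_{\ell\Lambda_0}$, the unique element of the finite set $D$ of minimal classical weight $w_0\bigl(\sum_{j=1}^N s_j\clfw_{r_j}\bigr)+\ell\clfw_0$. On the other hand, writing $v\tau=t_\nu\in\widetilde W$ with $\nu\in P_0$ and using $\tau^{-1}\Lambda_{\tau(0)}=\Lambda_0$, the extremal weight is $v\cdot\ell\Lambda_{\tau(0)}=\ell\,t_\nu\Lambda_0$, with classical projection $\ell(\clfw_0+\nu)$. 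Since $s_j=\ell c_{r_j}$ and $\clfw_{r^*}=-w_0(\clfw_r)$, we have $w_0\bigl(\sum_j s_j\clfw_{r_j}\bigr)=-\ell\sum_j c_{r_j}\clfw_{r_j^*}=\ell\clfw$, so comparing classical weights forces $\nu=\clfw$, i.e.\ $v\tau=t_\clfw$, with $\tau\in\Pi$ the length-$0$ part of $t_\clfw$ (consistent with its definition above). The main obstacle is the inductive step for the Demazure statement, and within it the identification of the energy function with the affine degree; this is the genuinely nontrivial input drawn from \cite{FSS07,KKMMNN91,ST12}.
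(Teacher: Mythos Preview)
The paper does not prove this theorem at all: it is stated as a result imported from the literature, with attribution \cite{FSS07,KKMMNN91,ST12}, and is used as a black box in the proofs of Theorem~\ref{thm:demazure_decomposition} and Theorem~\ref{thm:red}. There is therefore no ``paper's own proof'' to compare against.

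That said, your sketch is a reasonable outline of how the cited references establish the result. The isomorphism $\psi$ is indeed the Kyoto path construction for perfect crystals of a fixed level \cite{KKMMNN91,KKMMNN92}, and the identification of $B\otimes u_{\ell\Lambda_0}$ with a single Demazure crystal, together with the determination of $v$ via $v\tau=t_{\clfw}$, is precisely the content of \cite{FSS07} (see their Theorem~4.4 and the inductive argument of Theorem~4.7), sharpened in \cite{ST12} using the energy/affine-grading comparison. Your inductive step ``tensoring a Demazure crystal with a perfect KR crystal yields a Demazure crystal'' is exactly the engine of \cite[Thm.~4.7]{FSS07}, so you have correctly located the nontrivial input. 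One small caution: the claim that a tensor product of level-$\ell$ perfect crystals is again perfect of level $\ell$ is not needed (and is not how the cited papers proceed); the induction in \cite{FSS07} peels off one KR factor at a time rather than treating $B$ as a single perfect crystal.
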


Theorem~\ref{thm:demazure_embedding} is conjectured to hold for all affine types~\cite{HKOTY99,HKOTT02,FL06} under the assumption that $s_j / c_{r_j} \in \ZZ$ implies that $B^{r_j,s_j}$ is perfect (cf.~the perfectness conjecture). We say that the above tensor product $B := \bigotimes_{j=1}^N B^{r_j, s_j}$ is of \defn{level bounded by $\ell$} if $\ell$ is such that $\lceil s_j / c_{r_j} \rceil \leq \ell$ for all $j$. Theorem~\ref{thm:demazure_embedding} was generalized in~\cite[Prop.~5.16]{Naoi13} in the following way: when $B$ (still of nonexceptional type) is a tensor product of level bounded by $\ell$, and $s_j / c_{r_j} \in \ZZ$, then $B \otimes u_{\ell \Lambda_0}$ is isomorphic to a direct sum of Demazure crystals.

We need the following fact from~\cite[Prop.~8.1]{ST12} (which was essentially proved in~\cite{KKMMNN91}). The claim holds for \emph{all affine types} since~\cite[Lemma~7.3]{ST12} holds, by using the general definition of energy~\cite{KKMMNN91,HKOTY99,HKOTT02}; see~\cite[Lemma~6.4]{Naoi13} as well.
We also note that there is \emph{no assumption of perfectness}.

\begin{prop}[{\cite[Prop.~8.1]{ST12},\cite{KKMMNN91}}]  
\label{prop:B_hw_decomp}
Consider a tensor product of KR crystals $B$ of level bounded by $\ell$. Then there exists a sequence $( \Lambda^{(k)} \in P_{\ell}^+ )_{k=1}^N$ such that
\[
B \otimes B(\ell \Lambda_0) \iso \bigoplus_{k=1}^N B\!\left(\Lambda^{(k)}\right)\,.
\]
\end{prop}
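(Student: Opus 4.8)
The plan is to reduce the statement to the perfect (single-level) case handled by Theorem~\ref{thm:demazure_embedding} by repeatedly applying similarity maps (Theorem~\ref{thm:KR_similarity}), and then descend the decomposition through the similarity embedding using the fact that energy/highest-weight structure is controlled combinatorially. Write $B = \bigotimes_{j=1}^N B^{r_j,s_j}$ with $\lceil s_j/c_{r_j}\rceil \le \ell$. The key point is that the classically highest weight elements of $B \otimes B(\ell\Lambda_0)$ — equivalently, the affine highest weight elements, since $B(\ell\Lambda_0)$ is a highest weight crystal — are in bijection with the summands $B(\Lambda^{(k)})$, and what must be shown is: (i) there are exactly $N$ such elements (or, more precisely, that they can be indexed $k=1,\dots,N$ as stated), and (ii) each has affine weight in $P_\ell^+$, i.e.\ level exactly $\ell$.

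First I would treat the level: for any classically highest weight $b \otimes u_{\ell\Lambda_0} \in B \otimes B(\ell\Lambda_0)$ that is also affine highest weight, its weight $\Lambda = \wt(b) + \ell\Lambda_0 \in P^+$, and since $\wt(b)$ is a classical weight we get $\langle c, \Lambda\rangle = \langle c, \ell\Lambda_0\rangle = \ell$ where $c$ is the canonical central element; hence $\Lambda \in P_\ell^+$. This uses only that each $B^{r_j,s_j}$ has classical weights, so its affine weights lie in $\cl^{-1}(P_0)$ modulo $\delta$, contributing nothing to the level — standard for KR crystals. So (ii) is automatic and the content is entirely in (i), the count.

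For the count, I would argue by induction on $N$, peeling off the rightmost tensor factor. The base case $N=1$: for a single KR crystal $B^{r,s}$ with $\lceil s/c_r\rceil \le \ell$, apply the similarity map $\sigma_m \colon B^{r,s} \hookrightarrow B^{r,ms}$ with $m = c_r \ell / \gcd(s, c_r\ell)$ chosen so that $ms/c_r = m s / c_r \in \ZZ$; more carefully, pick $m$ so that $B^{r,ms}$ is perfect of level dividing $m\ell$, and compare with $B^{r,ms} \otimes B(m\ell\Lambda_0)$, which by Theorem~\ref{thm:demazure_embedding} (with the rescaled level) is a single highest weight crystal $B(\widehat\Lambda)$. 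The similarity map on $B^{r,s}$ together with the standard rescaling $B(\ell\Lambda_0) \to B(m\ell\Lambda_0)$ induces a similarity-type embedding $B^{r,s}\otimes B(\ell\Lambda_0) \hookrightarrow B^{r,ms}\otimes B(m\ell\Lambda_0) = B(\widehat\Lambda)$ carrying classically highest weight elements to classically highest weight elements and multiplying all $\varepsilon_i,\varphi_i$ by $m$; since $B(\widehat\Lambda)$ has a single affine highest weight element, the image must, and hence one checks $B^{r,s}\otimes B(\ell\Lambda_0)$ has exactly one, giving $N=1$. For the inductive step, write $B = B^{r_N,s_N}\otimes B'$ and use that by induction $B' \otimes B(\ell\Lambda_0) \iso \bigoplus_{k=1}^{N-1} B(\Lambda^{(k)})$; then $B \otimes B(\ell\Lambda_0) \iso B^{r_N,s_N}\otimes \bigl(\bigoplus_{k=1}^{N-1} B(\Lambda^{(k)})\bigr)$, and I would analyze $B^{r_N,s_N}\otimes B(\Lambda^{(k)})$ for each $k$ by the same similarity trick, reducing to a perfect tensor factor over $B(m\Lambda^{(k)})$ and invoking the perfect-case result — or more directly the energy-function argument of~\cite[Lemma~7.3]{ST12},~\cite[Lemma~6.4]{Naoi13} cited just before the Proposition, which shows each $B^{r_N,s_N}\otimes B(\Lambda^{(k)})$ decomposes with exactly one highest weight summand (since the maximal element of $B^{r_N,s_N}$ tensored with $u_{\Lambda^{(k)}}$ is the unique classically-highest element producing an affine-highest one). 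Summing over $k$ yields $N$ summands, completing the induction.

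The main obstacle I expect is making the similarity/rescaling reduction genuinely correct across the \emph{mixed-level tensor product}, since different factors $B^{r_j,s_j}$ require different multipliers $m_j$ to become perfect of a common level; one must take a common multiple $m = \mathrm{lcm}$ of the needed $m_j$ and check that the simultaneous similarity map $\bigotimes \sigma_{m} \colon B \to \bigotimes B^{r_j, m s_j}$ is compatible with tensor products (it is, since $\sigma_m$ multiplies all $\varepsilon_i,\varphi_i$ by $m$, so the signature rule is preserved), and that $\bigotimes B^{r_j,ms_j}$ is then genuinely a perfect tensor product of a single level $m\ell$ — which it need not be, since $ms_j/c_{r_j}$ may still differ across $j$! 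This is exactly why the statement is only about the number of summands and does not pin down the $\Lambda^{(k)}$: the cleanest route, and the one I would ultimately take, is to \emph{bypass} forcing a common level and instead use the cited energy-function lemmas directly — the argument in~\cite[Prop.~5.16]{Naoi13} and~\cite[Prop.~8.1]{ST12} shows that tensoring any highest weight crystal $B(\Lambda)$, $\Lambda \in P_\ell^+$, with a single KR crystal of level bounded by $\ell$ produces a crystal with exactly one affine highest weight element (the one built from the maximal element), with its weight again in $P_\ell^+$; then the whole Proposition is a clean induction on $N$ with no rescaling at all, and the similarity maps are needed only to inherit the $N=1$ statement from the perfect case (Theorem~\ref{thm:demazure_embedding}) in the sub-perfect range $\lceil s/c_r\rceil \le \ell$, $s/c_r \notin \ZZ$.
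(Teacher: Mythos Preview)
There is a genuine misreading that derails the whole proposal. The integer $N$ in the statement of Proposition~\ref{prop:B_hw_decomp} is \emph{not} the number of tensor factors of $B$; it is simply the (a priori unspecified) number of highest weight summands. You have conflated it with the $N$ in $B=\bigotimes_{j=1}^N B^{r_j,s_j}$ and then attempt to prove by induction that the number of summands equals the number of factors. That assertion is false in general. Already for a single non-perfect $B^{r,s}$ with $\lceil s/c_r\rceil<\ell$, the crystal $B^{r,s}\otimes B(\ell\Lambda_0)$ typically has several highest weight elements: these are the $b\otimes u_{\ell\Lambda_0}$ with $\varepsilon_i(b)\le\langle\alpha_i^\vee,\ell\Lambda_0\rangle$ for all $i$, and when the level of $\ell\Lambda_0$ strictly exceeds the level of $B^{r,s}$ there is no reason for uniqueness (uniqueness is precisely the perfectness condition at matching level). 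Consequently your base case ``$N=1$'' fails, and so does the inductive claim that $B^{r_N,s_N}\otimes B(\Lambda^{(k)})$ has exactly one highest weight summand. The similarity argument does not rescue this: even after embedding into $B^{r,ms}\otimes B(m\ell\Lambda_0)$, the target is not covered by Theorem~\ref{thm:demazure_embedding} unless $ms/c_r=m\ell$, and if you force that you have changed the level.

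The paper does not actually give a proof of this proposition; it is quoted from~\cite[Prop.~8.1]{ST12} and~\cite{KKMMNN91}, with the remark that the key input is the energy function (\cite[Lemma~7.3]{ST12}, \cite[Lemma~6.4]{Naoi13}). The substantive content is part~(a): that $B\otimes B(\ell\Lambda_0)$ is a direct sum of highest weight crystals at all (this is not automatic for $U_q'(\g)$-crystals). The energy argument shows that from any element one can reach an affine highest weight element by applying $e_i$'s, because energy strictly increases along $e_0$ and is bounded; combined with the finiteness of weight spaces this yields the decomposition. Your level computation for part~(ii) is correct and is indeed the easy half.
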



\section{Main results}
\label{sec:results}

For the remainder of this paper, we will consider KR crystals of nonexceptional type. It is known that, for any $J \subsetneq I$, under the Levi branching to the canonical subalgebra $\g_J$ with index set $J$ (\textit{i.e.}, we remove all $i$-edges for $i \in J \setminus I$), $B^{r,s}$ is a direct sum of highest weight $U_q(\g_J)$-crystals.

In this section we prove our main results.

\begin{remark}\label{contr-dual}
There are contragredient dual versions of all our results. All the proofs hold for the contragredient dual by interchanging $e_i \leftrightarrow f_i$.
\end{remark}

We start with our main tool, namely the generalization of Naoi's result~\cite[Prop.~5.16]{Naoi13} (which, in turn, is a generalization of Theorem~\ref{thm:demazure_embedding}, as discussed in Section~\ref{sec:kr}). Our generalization holds without the perfectness assumption. 

\begin{thm}
\label{thm:demazure_decomposition}
Let $B$ be a tensor product of KR crystals of level bounded by $\ell$, having the decomposition in Proposition~{\rm \ref{prop:B_hw_decomp}}.
Then there exists a sequence $( \lambda^{(k)} \in P_0^- )_{k=1}^N$ such that
\begin{equation}\label{dec-into-dem}
B \otimes u_{\ell \Lambda_0} = \bigoplus_{k=1}^N B_{\lambda^{(k)}} \iso \bigoplus_{k=1}^N B_{w^{(k)}}\!\left(\Lambda^{(k)}\right)\,;
\end{equation}
here the following hold:
\begin{itemize}
\item there exists a unique element $b^{(k)}_{\min} \otimes u_{\ell \Lambda_0} \in B_{\lambda^{(k)}}$ satisfying $\wt\!\left(b^{(k)}_{\min}\right) = \lambda^{(k)}$ and
\[
\wt(b) - \wt\!\left(b^{(k)}_{\min}\right) \in Q_0^+ \setminus \{0\} \;\;\:\mbox{ for all }\,b\otimes u_{\ell \Lambda_0} \in B_{\lambda^{(k)}} \setminus \{b^{(k)}_{\min} \otimes u_{\ell \Lambda_0} \}\,,
\]
\item $w^{(k)}\!\left(\Lambda^{(k)}\right) = \lambda^{(k)} + \ell \Lambda_0$, for $w^{(k)} \in W$ of minimal length.
\end{itemize}
\end{thm}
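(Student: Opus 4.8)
The plan is to bootstrap from Proposition~\ref{prop:B_hw_decomp}, which gives $B \otimes B(\ell\Lambda_0) \iso \bigoplus_{k=1}^N B(\Lambda^{(k)})$ with $\Lambda^{(k)} \in P_\ell^+$, and to intersect this isomorphism with the subcrystal $B \otimes u_{\ell\Lambda_0}$. First I would observe that $u_{\ell\Lambda_0}$ is the unique highest weight element of $B(\ell\Lambda_0)$, hence inside $B \otimes B(\ell\Lambda_0)$ the set $B \otimes u_{\ell\Lambda_0}$ is characterized intrinsically: an element $b \otimes u$ of the tensor product lies in it iff $u = u_{\ell\Lambda_0}$, equivalently iff applying $e_i^{\max}$ repeatedly in any order eventually exhausts the $B(\ell\Lambda_0)$-tensor factor to $u_{\ell\Lambda_0}$ (this is the standard "tensoring with the highest weight vector removes the non-level-$\ell$ Demazure $0$-arrows" phenomenon; compare Theorem~\ref{thm:demazure_embedding}). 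The key structural input is that each summand $B(\Lambda^{(k)})$, when intersected with $B \otimes u_{\ell\Lambda_0}$, is a \emph{Demazure crystal} $B_{w^{(k)}}(\Lambda^{(k)})$: this is exactly Naoi's argument~\cite[Prop.~5.16]{Naoi13} in the perfect case, and the point of the theorem is that his argument never actually uses perfectness — it only uses the Levi-branching property recalled at the start of Section~\ref{sec:results} (that $B^{r,s}$ branches to a direct sum of highest weight $\g_J$-crystals for every $J \subsetneq I$) together with Theorem~\ref{thm:excellent_filtration} (combinatorial excellent filtration). So the first substantial step is to isolate precisely where~\cite{Naoi13} invokes perfectness and replace that input by the level-bounded hypothesis $\lceil s_j/c_{r_j}\rceil \le \ell$, which guarantees $\varepsilon_0(u_j) \le \ell$ along the ground-state-type string and hence that no $0$-arrow beyond the $\ell$-th is a Demazure arrow.

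Next I would set up the Demazure description concretely. Write $B_{\lambda^{(k)}}$ for the connected component of $B \otimes u_{\ell\Lambda_0}$ corresponding to the $k$-th summand, and identify its "bottom" element: inside a Demazure crystal $B_w(\Lambda)$ the extremal weight that generates it is $w\Lambda$, and it contains a unique element of that extremal weight. Since $B \otimes u_{\ell\Lambda_0}$ sits inside $B \otimes B(\ell\Lambda_0) \iso \bigoplus B(\Lambda^{(k)})$ and the tensor factor is pinned to $u_{\ell\Lambda_0}$ of weight $\ell\Lambda_0$, the generating extremal weight of the $k$-th piece has the form $\lambda^{(k)} + \ell\Lambda_0$ with $\lambda^{(k)} := \cl$-part lying in $P_0$; I need to argue $\lambda^{(k)} \in P_0^-$, which will follow because the relevant Weyl group element can be chosen in $W_0$ (the level-$0$ part) sending the dominant classical weight to its antidominant conjugate, exactly as in the classical-character discussion around Theorem~\ref{thm:demazure_embedding}. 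The uniqueness of $b^{(k)}_{\min} \otimes u_{\ell\Lambda_0}$ with $\wt(b^{(k)}_{\min}) = \lambda^{(k)}$ and the property that every other element differs from it by a nonzero element of $Q_0^+$ is then the statement that $\lambda^{(k)}$ is the \emph{unique minimal classical weight} occurring in that component — this is the crystal-theoretic shadow of $\Lambda^{(k)}$ being the highest weight of the summand (all weights of $B(\Lambda^{(k)})$ are $\le \Lambda^{(k)}$ in the affine dominance order, and within $B \otimes u_{\ell\Lambda_0}$ this translates, after subtracting $\ell\Lambda_0$ and projecting, into $\wt(b) - \lambda^{(k)} \in Q_0^+$). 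Finally $w^{(k)}$ is taken of minimal length realizing $w^{(k)}\Lambda^{(k)} = \lambda^{(k)} + \ell\Lambda_0$; minimality is legitimate because by Proposition~\ref{prop:bruhat_Demazure} the Demazure crystal $B_v(\Lambda^{(k)})$ depends only on the Bruhat class of $v$, and distinct reduced words for a minimal-length $v$ give the same subcrystal.

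The main obstacle I anticipate is the first step: carefully auditing Naoi's proof of~\cite[Prop.~5.16]{Naoi13} to confirm that perfectness is used \emph{only} to guarantee the existence of a ground-state/Kyoto-path structure and that this can be bypassed. Concretely, Naoi's induction on $N$ (the number of tensor factors) uses, at each stage, that tensoring one more KR crystal on the left preserves the "direct sum of Demazure crystals" property; the inductive step rests on a Levi-branching compatibility and on Theorem~\ref{thm:excellent_filtration}. I expect that the level-bounded hypothesis $\lceil s_j/c_{r_j}\rceil \le \ell$ is exactly the right substitute: it ensures that each $B^{r_j,s_j}$, though possibly non-perfect, still satisfies $\varepsilon_0 \le \ell$ on its maximal/ground-state element, which is all that is needed to see that the relevant $0$-strings are truncated at level $\ell$ after tensoring with $u_{\ell\Lambda_0}$. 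Secondary care is needed to verify that Proposition~\ref{prop:B_hw_decomp} (which is already stated for level-bounded $B$ with no perfectness) is genuinely available as the starting point, so that the only new work is the passage from the "$\otimes B(\ell\Lambda_0)$" decomposition to the "$\otimes u_{\ell\Lambda_0}$" Demazure decomposition — and that passage is the formal, Theorem~\ref{thm:excellent_filtration}-driven part.
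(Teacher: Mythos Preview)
Your plan has a genuine gap at precisely the point you flag as the ``main obstacle.'' You conjecture that Naoi's argument in \cite[Prop.~5.16]{Naoi13} ``never actually uses perfectness'' and that the level-bounded hypothesis $\lceil s_j/c_{r_j}\rceil \le \ell$ is a direct substitute. This is not correct. Naoi's induction on the number of tensor factors, together with the combinatorial excellent filtration (Theorem~\ref{thm:excellent_filtration}), does reduce the general statement to the single-factor case $B = B^{r,s}$ with $\ell = \lceil s/c_r\rceil$; that reduction is indeed perfectness-free, and the paper uses it too. But the \emph{base case} --- showing that $B^{r,s}\otimes u_{\ell\Lambda_0}$ itself is a direct sum of Demazure crystals --- relies on Theorem~\ref{thm:demazure_embedding}, which requires $s/c_r \in \ZZ$, i.e.\ perfectness. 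Your proposed replacement (``$\varepsilon_0 \le \ell$ on the ground-state element, so the $0$-strings are truncated at level $\ell$'') only tells you that $B^{r,s}\otimes u_{\ell\Lambda_0}$ embeds strictly in $B^{r,s}\otimes B(\ell\Lambda_0)$; it does not by itself identify the image with a union of Demazure subcrystals.

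The paper supplies the missing ingredient: the similarity map $\sigma_{c_r}\colon B^{r,s}\to B^{r,c_r s}$ of Theorem~\ref{thm:KR_similarity}. Since $c_r s / c_r = s \in \ZZ$, the crystal $B^{r,c_r s}$ \emph{is} perfect, so Theorem~\ref{thm:demazure_embedding} and the excellent filtration give $B^{r,c_r s}\otimes u_{c_r\ell\Lambda_0}\iso \bigoplus_{(w,\Lambda)} B_w(\Lambda)$. One then shows (Lemma~\ref{lemma:Demazure_similarity}) that the image of the induced similarity map $\sigma^D_{c_r}$ meets each $B_w(\Lambda)$ in a copy of $B_w(\Lambda/c_r)$, using that $\Lambda/c_r\in P^+$ because weights scale by $c_r$. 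This is the actual mechanism that handles the non-perfect single factor, and it is absent from your outline. The remaining assertions about $b^{(k)}_{\min}$ and $\lambda^{(k)}\in P_0^-$ are then obtained from \cite[Lemma~4.8]{Naoi13} (Lemma~\ref{lemma:Demazure_weights} here), whose antidominance hypothesis is verified via the classical decomposition of $B$ --- not by the affine-dominance argument you sketch.
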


Roughly speaking, our proof follows the proofs of~\cite[Prop.~5.16]{Naoi13} or~\cite[Thm.~4.7]{FSS07}, by reducing the statement to the case when $B = B^{r,s}$. Then we use the similarity map from Theorem~\ref{thm:KR_similarity}. In order to complete our proof, we use~\cite[Lemma~4.8]{Naoi13} and another elementary fact stated below.

\begin{lemma}[{\cite[Lemma~4.8]{Naoi13}}]
\label{lemma:Demazure_weights}
Let $\Lambda \in P^+$ and $w \in \widetilde{W}$, and assume that $\langle \alpha_i^{\vee}, w\Lambda \rangle \leq 0$ for all $i \in I_0$. Then for any $b \in B_w(\Lambda)$, we have $b = u_{w\Lambda}$ or
\[
\cl\bigl( \wt(b) \bigr) \in \cl( w \Lambda ) + (Q_0^+ \setminus \{0\})\,.
\]
\end{lemma}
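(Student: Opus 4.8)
The plan is to localize the whole statement at the extremal vertex $u_{w\Lambda}$ and read off the classical weight inequality from two facts: that $w\Lambda$ is the \emph{affine} lowest weight of the Demazure crystal, and that the hypothesis makes $\cl(w\Lambda)$ the antidominant vertex of a classical weight polytope. First I would record that, since $B_w(\Lambda)$ is the crystal of the Demazure module $U_q^+(\g)\,u_{w\Lambda}$, every $b \in B_w(\Lambda)$ satisfies $\wt(b) \in w\Lambda + Q^+$: the raising operators only increase the weight, and $u_{w\Lambda}$ is the unique lowest weight element, being extremal and hence spanning a one-dimensional weight space. Applying $\cl$ gives $\cl(\wt(b)) - \cl(w\Lambda) \in \cl(Q) = Q_0$, so it remains only to upgrade this to membership in $Q_0^+$ and to pin down the equality case. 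As a sanity check consistent with the hypothesis, extremality gives $\varphi_i(u_{w\Lambda}) = \max\bigl(0, \langle \alpha_i^{\vee}, w\Lambda \rangle\bigr) = 0$ for all $i \in I_0$, so $u_{w\Lambda}$ is classically lowest weight, exactly the behaviour expected of the minimizer.

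For the uniqueness of the equality case I would argue entirely at the affine level. Suppose $b \neq u_{w\Lambda}$ has $\cl(\wt(b)) = \cl(w\Lambda)$. Combined with $\wt(b) \in w\Lambda + Q^+$, this forces $\wt(b) - w\Lambda \in Q^+ \cap \ker\cl = \ZZ_{\geq 0}\,\delta$, say $\wt(b) = w\Lambda + n\delta$. But the set of weights of $B(\Lambda)$ is $W$-invariant and $W$ fixes $\delta$, while $\Lambda + \delta$ is not a weight (as $\Lambda$ is highest); hence $w\Lambda + \delta = w(\Lambda + \delta)$ is not a weight either, forcing $n = 0$. Then $\wt(b) = w\Lambda$, and one-dimensionality of the extremal weight space gives $b = u_{w\Lambda}$, a contradiction. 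Thus for $b \neq u_{w\Lambda}$ the membership, once established, is automatically in $Q_0^+ \setminus \{0\}$.

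The heart of the matter, and the step where the hypothesis is essential, is converting affine positivity into classical positivity. The naive route fails: writing $\wt(b) - w\Lambda = \sum_{i} c_i \alpha_i$ with $c_i \geq 0$, the class $\cl(\wt(b)) - \cl(w\Lambda) = \sum_{i \in I_0} c_i \alpha_i + c_0\, \cl(\alpha_0)$ need not lie in $Q_0^+$, because $\cl(\alpha_0) = -a_0^{-1}\sum_{i \in I_0} a_i \alpha_i$ is a \emph{negative} combination of the classical simple roots. Introducing the coweights $\varpi_j^{\vee}$ ($j \in I_0$) dual to $\{\alpha_i\}_{i \in I_0}$ and extended to $\g$ at level $0$ (so $\langle \varpi_j^{\vee}, \alpha_0 \rangle = -a_j/a_0$), the desired conclusion $\cl(\wt(b)) - \cl(w\Lambda) \in Q_0^+$ is equivalent to $\langle \varpi_j^{\vee}, \cl(\wt(b)) \rangle \geq \langle \varpi_j^{\vee}, \cl(w\Lambda) \rangle$ for every $j \in I_0$. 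Here I would invoke the standard property that the weights of the Demazure module $B_w(\Lambda)$ lie in the convex hull of the extremal weights $\{v\Lambda : v \leq w\}$ it contains, so that $\cl(\wt(b)) \in \operatorname{conv}\{\cl(v\Lambda) : v \leq w\}$. Since $\cl(w\Lambda)$ is antidominant by hypothesis and each $\varpi_j^{\vee}$ is a dominant coweight, the functional $\langle \varpi_j^{\vee}, \cdot \rangle$ attains its minimum over the $W_0$-orbit polytope of $\cl(w\Lambda)$ at $\cl(w\Lambda)$ itself; the remaining point is therefore the containment $\{\cl(v\Lambda) : v \leq w\} \subseteq \operatorname{conv}\bigl(W_0\, \cl(w\Lambda)\bigr)$, equivalently that the dominant representative of each $\cl(v\Lambda)$ is dominated by that of $\cl(w\Lambda)$.

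I expect this last containment to be the main obstacle, as it is the only place where the Bruhat relation $v \leq w$ and the antidominance of $\cl(w\Lambda)$ must be genuinely combined: intuitively, as $v$ increases towards $w$ the vertex $v\Lambda$ migrates from the dominant towards the antidominant chamber, so $\cl(w\Lambda)$ is the extreme vertex controlling all the others. I would establish it by induction on $\ell(w)$ along a reduced word, using the recursion $B_w(\Lambda) = \{ f_p^k b' : b' \in B_{s_p w}(\Lambda),\ k \geq 0 \} \setminus \{0\}$ and tracking how a single Demazure operator moves the supporting polytope by the reflection $s_p$; this is essentially the content of Lemma~\ref{lemma:Demazure_weights} as proved in~\cite{Naoi13}, which may also simply be cited. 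The other two steps---affine positivity and uniqueness of equality---are routine given the definitions of the Demazure module and of extremal (hence maximal) weights recalled above.
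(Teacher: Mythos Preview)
The paper does not supply its own argument for this lemma: it is quoted directly from \cite[Lemma~4.8]{Naoi13} and used as a black box in the proof of Theorem~\ref{thm:demazure_decomposition}. So there is no in-paper proof to compare against, and your outline must stand on its own.

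Your first two steps are sound. That every weight of $B_w(\Lambda)$ lies in $w\Lambda + Q^+$ is indeed the defining feature of the Demazure module $U_q^+(\g)\,v_{w\Lambda}$, and your $\delta$-argument for the uniqueness of the equality case is correct: $Q^+ \cap \ker\cl = \ZZ_{\geq 0}\,\delta$, the weight set of $B(\Lambda)$ is $W$-stable, $W$ fixes $\delta$, and $\Lambda + n\delta$ with $n>0$ is never a weight.

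The genuine gap is in your third step. You reduce classical positivity to two ingredients: (a) the weights of $B_w(\Lambda)$ lie in $\operatorname{conv}\{v\Lambda : v \leq w\}$; and (b) $\{\cl(v\Lambda) : v \leq w\} \subseteq \operatorname{conv}\bigl(W_0\,\cl(w\Lambda)\bigr)$. Ingredient~(a) does follow from the Demazure character formula via the standard support estimate for the operators $D_i$. Ingredient~(b), however, is not established. Your proposed ``induction on $\ell(w)$ along a reduced word'' does not go through as stated: the inductive step would require the antidominance hypothesis for $\cl(s_p w\,\Lambda)$, which generally fails at intermediate stages. You then write that (b) ``is essentially the content of Lemma~\ref{lemma:Demazure_weights} as proved in~\cite{Naoi13}, which may also simply be cited''---but Lemma~\ref{lemma:Demazure_weights} \emph{is} the statement under consideration, so invoking it here is circular. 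As it stands, the proposal reformulates the hard part rather than proving it.

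If you want a route that actually closes the gap, note that the hypothesis $\langle \alpha_i^{\vee}, w\Lambda\rangle \leq 0$ for $i\in I_0$ forces (after passing to the minimal-length representative modulo the stabilizer of $\Lambda$) $s_i w \leq w$ for every $i\in I_0$; hence $B_w(\Lambda)=\mathcal{F}_i B_{s_iw}(\Lambda)$ is closed under $f_i$ as well as $e_i$, so $B_w(\Lambda)$ is a genuine $U_q(\g_0)$-crystal. This $W_0$-symmetry of the classical weight multiset, combined with the affine bound $\wt(b)\in w\Lambda + Q^+$, is what makes the comparison with the single antidominant extremal weight $\cl(w\Lambda)$ go through, and is closer in spirit to the argument in~\cite{Naoi13}.
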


\begin{lemma}
\label{lemma:Demazure_similarity}
Let $\sigma_m \colon B(\Lambda) \to B(m\Lambda)$ be a similarity map. For any $w \in W$, this induces a similarity map $\sigma_m^D \colon B_w(\Lambda) \to B_w(m\Lambda)$. The image of this map consists of those vertices $b$ in $B_w(m\Lambda)$ for which each $e_{i_j}$ in {\rm \eqref{def:dem}} is applied a multiple of $m$ number of times. 
\end{lemma}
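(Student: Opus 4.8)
The plan is to show directly that the map $\sigma_m^D$ obtained by restricting the similarity map $\sigma_m\colon B(\Lambda)\to B(m\Lambda)$ to the Demazure subcrystal $B_w(\Lambda)$ is well-defined, that it is again a similarity map onto its image, and then to identify that image. Fix a reduced expression $w=s_{i_1}s_{i_2}\cdots s_{i_\ell}$, which is used to define both $B_w(\Lambda)$ and $B_w(m\Lambda)$ via \eqref{def:dem}. First I would observe that for a single index $i$ and any $b\in B(\Lambda)$, the similarity axioms \eqref{def:sim} give $\varepsilon_i(\sigma_m(b))=m\varepsilon_i(b)$, so $e_i^{\max}\sigma_m(b)=e_i^{m\varepsilon_i(b)}\sigma_m(b)=\sigma_m(e_i^{\varepsilon_i(b)}b)=\sigma_m(e_i^{\max}b)$, using $e_i\mapsto e_i^m$ under $\sigma_m$. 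Here I must note that $e_i^k\sigma_m(b)=\sigma_m(e_i^{k/m}b)$ when $m\mid k$ and that $e_i^k\sigma_m(b)=0$ whenever $m\nmid k$ within the admissible range, which is exactly the content of the similarity map being a strict embedding intertwining $e_i$ with $e_i^m$; in particular $\sigma_m$ sends $i$-strings to $i$-strings with spacing $m$. Iterating this identity along the reduced word gives $e_{i_1}^{\max}e_{i_2}^{\max}\cdots e_{i_\ell}^{\max}\sigma_m(b)=\sigma_m\!\left(e_{i_1}^{\max}e_{i_2}^{\max}\cdots e_{i_\ell}^{\max}b\right)$. Since $\sigma_m(u_\Lambda)=u_{m\Lambda}$ (the unique highest weight element maps to the unique highest weight element, as $\wt(\sigma_m(u_\Lambda))=m\Lambda$ and all $\varepsilon_i$ vanish), we conclude $b\in B_w(\Lambda)$ iff $\sigma_m(b)$ lies in $B_w(m\Lambda)$ with the additional property that each $e_{i_j}^{\max}$ in the displayed composition is an application of $e_{i_j}$ a multiple of $m$ times. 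This simultaneously proves that $\sigma_m(B_w(\Lambda))\subseteq B_w(m\Lambda)$ and pins down the image.

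Next I would check that the corestriction $\sigma_m^D\colon B_w(\Lambda)\to B_w(m\Lambda)$ satisfies all of \eqref{def:sim}. The weight, $\varepsilon_i$, and $\varphi_i$ conditions are inherited verbatim from $\sigma_m$ because $B_w(\Lambda)$ is a full subcrystal of $B(\Lambda)$ and $B_w(m\Lambda)$ is a full subcrystal of $B(m\Lambda)$, so these statistics are computed the same way inside the subcrystals. For the operator conditions $e_i\mapsto e_i^m$, $f_i\mapsto f_i^m$, I need that when $e_i b\in B_w(\Lambda)$ the element $e_i^m\sigma_m(b)$ stays in $B_w(m\Lambda)$, and symmetrically for $f_i$; but $B_w(m\Lambda)$ is closed under $e_i$ for all $i\in I$ (Demazure crystals are $e_i$-closed), and closure under $f_i$ when the target is still in $B_w(m\Lambda)$ follows from axiom (2) of a crystal together with $e_i$-closure. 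So $\sigma_m^D$ is indeed a similarity map. Finally I would spell out the image description: by the iterated identity above, $b'\in B_w(m\Lambda)$ lies in $\im\sigma_m^D$ iff $b'=\sigma_m(b)$ for some $b\in B(\Lambda)$ — equivalently $b'\in\im\sigma_m$ — in which case automatically $b\in B_w(\Lambda)$; and membership in $\im\sigma_m$ is characterized (by the spacing-$m$ property of similarity maps) precisely by the condition that in computing $e_{i_1}^{\max}\cdots e_{i_\ell}^{\max}b'=u_{m\Lambda}$ each $e_{i_j}^{\max}$ acts a multiple of $m$ times. This is the asserted characterization.

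The main obstacle I anticipate is the careful bookkeeping around the claim that $e_i^k\sigma_m(b)=0$ unless $m\mid k$ (for $k\le\varepsilon_i(\sigma_m(b))$), i.e., that the image of a similarity map meets each $i$-string only in positions congruent to $0$ mod $m$; this is standard for similarity maps but should be invoked explicitly, perhaps citing~\cite{K96} or deriving it from \eqref{def:sim} by noting $\varepsilon_i(\sigma_m(b))=m\varepsilon_i(b)$ is divisible by $m$ and $\sigma_m$ commutes with $e_i^m$, so the $\sigma_m$-image points in the $i$-string of $\sigma_m(b)$ are exactly the bottom, $m$ steps up, $2m$ steps up, and so on. A second, minor point to be careful about is that the definition of $e_{i_j}^{\max}$ in $B_w(m\Lambda)$ uses $\varepsilon_{i_j}$ computed in $B(m\Lambda)$, not merely within some intermediate subcrystal, so that the iteration of the single-index identity along the word is valid without boundary issues — but this is immediate since Demazure crystals are full subcrystals and $e_i$-closed, so $e_{i_j}^{\max}$ is unambiguous.
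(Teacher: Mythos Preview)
Your argument is correct and is precisely the detailed unpacking that the paper leaves implicit: the paper's own proof is the single sentence ``This follows immediately from the definition of a similarity map~\eqref{def:sim} and of a Demazure crystal~\eqref{def:dem},'' and your key identity $e_i^{\max}\sigma_m(b)=\sigma_m(e_i^{\max}b)$, iterated along a reduced word, is exactly what makes that immediate. One small slip to fix: the claim ``$e_i^k\sigma_m(b)=0$ whenever $m\nmid k$ within the admissible range'' is false (those elements are nonzero, they just lie outside $\im\sigma_m$); you correctly restate this later as ``the $\sigma_m$-image meets each $i$-string only in positions congruent to $0$ mod $m$,'' and only the latter version is needed.
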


\begin{proof}
This follows immediately from the definition of a similarity map~\eqref{def:sim} and of a Demazure crystal \eqref{def:dem}.
\end{proof}

\begin{proof}[Proof of Theorem {\rm \ref{thm:demazure_decomposition}}]
It is sufficient to restrict to $B = B^{r,s}$, by the reduction argument in the proof of~\cite[Prop.~5.16]{Naoi13} (this is essentially the induction described in~\cite[Thm.~4.7]{FSS07}).
Furthermore, it is sufficient to consider $B^{r,s} \otimes u_{\ell \Lambda_0}$, where $\ell = \lceil s / c_r \rceil$, by a similar argument to the one used in the proof of~\cite[Prop.~5.16]{Naoi13}.
When $s / c_r \in \ZZ$, the claim holds by Theorem~\ref{thm:demazure_embedding}. Therefore, assume $s / c_r \notin \ZZ$.

By Theorem~\ref{thm:demazure_embedding}, we have 
\[B^{r, c_r s} \otimes u_{s\Lambda_{\tau(0)}} \iso B_v(s \Lambda_{\tau(0)})\,,\]
 where $t_{-c_r \clfw_{r^*}} = v\tau$. Therefore, we have
\begin{align}\label{red-perf}
B^{r, c_r s} \otimes u_{c_r \ell \Lambda_{\tau(0)}} & \iso B^{r, c_r s} \otimes u_{s \Lambda_{\tau(0)}} \otimes u_{(c_r \ell - s) \Lambda_{\tau(0)}}
\\ & \iso B_v(s \Lambda_{\tau(0)}) \otimes u_{(c_r \ell - s) \Lambda_{\tau(0)}} \iso \bigoplus_{(w,\Lambda)} B_w(\Lambda)\,,\nonumber
\end{align}
where the last isomorphism is by the combinatorial excellent filtration in Theorem~\ref{thm:excellent_filtration}. 
Now consider the similarity map $\sigma_{c_r}: B^{r,s} \to B^{r, c_r s}$ from Theorem~\ref{thm:KR_similarity}.
This gives a similarity map 
\[\sigma^D_{c_r} \colon B^{r, s} \otimes u_{\ell \Lambda_{\tau(0)}} \to B^{r,c_r s} \otimes u_{c_r\ell \Lambda_{\tau(0)}}\,.\]
Composing the latter map with the crystal isomorphisms in \eqref{red-perf}, we want to identify the image $\im \sigma^D_{c_r}$ of $B^{r, s} \otimes u_{\ell \Lambda_{\tau(0)}}$ inside the direct sum of Demazure crystals in \eqref{red-perf}. 

Now assume that $\im \sigma^D_{c_r}$ intersects some Demazure crystal $B_w(\Lambda)$. Pick some vertex in the intersection, and a sequence of $e_{i_j}$ to $u_{\Lambda}$ as in \eqref{def:dem}. By the definition of a similarity map \eqref{def:sim}, each $e_{i_j}$ is applied a multiple of $k$ number of times, and the upper endpoints of the various strings belong to $\im \sigma^D_{c_r}\cap B_w(\Lambda)$. In particular, so does $u_{\Lambda}$.  We can now see that $\im \sigma^D_{c_r}\cap B_w(\Lambda)$ is characterized by the condition in Lemma~\ref{lemma:Demazure_similarity}. 

As $\wt\bigl( \sigma^D_{c_r}(b) \bigr) = c_r \wt(b)$ for all $b \in B^{r,s} \otimes u_{\ell \Lambda_{\tau(0)}}$, by~\eqref{def:sim}, we have $\Lambda / c_r \in P^+$. Combining the above facts with Lemma~\ref{lemma:Demazure_similarity}, we deduce that
\[\im \sigma^D_{c_r}\cap B_w(\Lambda) \iso B_w(\Lambda / c_r)\,.\]
Thus, we proved the decomposition~\eqref{dec-into-dem}.

Note that the multiset of weights $\Lambda^{(k)}$ in \eqref{dec-into-dem} coincides with the one in Proposition~{\rm \ref{prop:B_hw_decomp}}. Indeed, by the signature rule, all highest weight vertices in $B \otimes B(\ell \Lambda_0)$ are of the form $b\otimes u_{\ell \Lambda_0}$ with $b\in B$, so they are highest weight vertices in $B \otimes u_{\ell \Lambda_0}$; the reverse inclusion is obvious. 

Finally, the existence of the vertices $b_{\min}^{(k)}$ with the desired properties follows from Lemma~\ref{lemma:Demazure_weights}. Indeed, let us verify the hypothesis of this Lemma. Take an affine Demazure crystal $B_{w^{(k)}}\!\left(\Lambda^{(k)}\right)$ in~\eqref{dec-into-dem}. It has a decomposition into classically highest weight crystals, because $B$ has such a decomposition, and tensoring with $u_{\ell \Lambda_0}$ does not affect the classical crystal structure. In the mentioned decomposition of $B_{w^{(k)}}\!\left(\Lambda^{(k)}\right)$, the unique element of weight $w^{(k)}\!\left(\Lambda^{(k)}\right)$ has to be a classically lowest weight element, so $w^{(k)}\!\left(\Lambda^{(k)}\right)-\ell \Lambda_0$ is a finite antidominant weight. 
\end{proof}

\begin{remarks}
(1) We could have proved Theorem~\ref{thm:demazure_decomposition} by applying the similarity map and then by directly appealing to~\cite[Prop.~5.16]{Naoi13}. However, we found it more illuminating to show the role of the similarity map in a very explicit way: in the single factor case.

(2) Theorem~\ref{thm:demazure_decomposition} does not imply that $B \otimes u_{\ell \Lambda_0}$ is a single Demazure crystal. However, it has been shown in~\cite{FL06} that $B = B^{r,s}$ is isomorphic as an $U_q(\g_0)$-crystal to a single Demazure crystal for a number of special $r \in I_0$ in exceptional types. If we combine this with Theorem~\ref{thm:demazure_decomposition}, we obtain Theorem~\ref{thm:demazure_embedding} for these cases.
\end{remarks}

Following~\cite{ST12}, we define a \defn{level $\ell$ Demazure edge} as being an $i$-edge $b' \to b$ in the crystal graph such that either $i \in I_0$ or $\varepsilon_0(b) > \ell$. In other words, an edge is not a level $\ell$ Demazure edge if it is in the length $\ell$ head of a $0$-string. A  \defn{level $\ell$ dual Demazure edge} is defined similarly, using the length $\ell$ tail of a $0$-string. 
Let $\widetilde{D}_{\ell}(B)$ and $\widetilde{DD}_{\ell}(B)$ denote the subcrystals of $B$ obtained by removing all edges that are not level $\ell$ Demazure edges in $B$, respectively dual Demazure edges.

The following lemma is well-known to experts. It follows immediately from the tensor product rule, and motivates the terminology of Demazure edge.

\begin{lemma}
\label{lemma:demazure_removal}
Let $B$ be a tensor product of KR crystals of level bounded by $\ell$.
The map
\begin{align*}
\rho_{\ell} \colon & \widetilde{D}_{\ell}(B) \to B \otimes u_{\ell \Lambda_0}
\end{align*}
given by $\rho_{\ell}(b) = b \otimes u_{\ell \Lambda_0}$ is a crystal isomorphism (up to a weight shift).
\end{lemma}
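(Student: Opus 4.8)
The plan is to show that $\rho_\ell$ is a bijection onto its image and that it intertwines the crystal operators, the point being that tensoring on the right by the (single-vertex, highest-weight) crystal $u_{\ell\Lambda_0}$ has no effect on $i$-arrows for $i\in I_0$, and on a $0$-arrow it only truncates the head of each $0$-string by exactly $\ell$ steps. First I would record what $b\otimes u_{\ell\Lambda_0}$ does to the statistics: since $\wt(u_{\ell\Lambda_0})=\ell\Lambda_0$, $\varepsilon_i(u_{\ell\Lambda_0})=0$ for all $i$, and $\varphi_i(u_{\ell\Lambda_0})=\langle\alpha_i^\vee,\ell\Lambda_0\rangle$, which is $0$ for $i\in I_0$ and $\ell$ for $i=0$. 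Plugging into the tensor product rule, for $i\in I_0$ we get $e_i(b\otimes u_{\ell\Lambda_0})=(e_ib)\otimes u_{\ell\Lambda_0}$ and likewise for $f_i$, with $\varepsilon_i$ and $\varphi_i$ unchanged; for $i=0$ we get $\varepsilon_0(b\otimes u_{\ell\Lambda_0})=\max(0,\varepsilon_0(b)-\ell)=\max(0,\varepsilon_0(b)-\ell)$ and $\varphi_0(b\otimes u_{\ell\Lambda_0})=\varphi_0(b)+\langle\alpha_0^\vee,\wt(b)\rangle$ (which, using axiom (1), also equals $\max(0,\varepsilon_0(b)-\ell)+\langle\alpha_0^\vee,\wt(b)+\ell\Lambda_0\rangle$, consistent with the first quantity).

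Next I would translate the definition of $\widetilde D_\ell(B)$ into the same language: an $i$-edge $b'\to b$ (so $f_ib'=b$, $b'=e_ib$) survives in $\widetilde D_\ell(B)$ exactly when $i\in I_0$ or $\varepsilon_0(b)>\ell$; equivalently, the $0$-edges we keep at $b$ are precisely those with $\varepsilon_0(b)-\ell\ge 1$, i.e. those for which $e_0b$ and $e_0(b\otimes u_{\ell\Lambda_0})=(e_0b)\otimes u_{\ell\Lambda_0}$ are both nonzero in $B\otimes u_{\ell\Lambda_0}$ with the $0$-string structure shifted down by $\ell$. So on the level of edge-colored graphs, $\widetilde D_\ell(B)$ is obtained from $B$ by deleting the top $\ell$ vertices of each $0$-string exactly when $B\otimes u_{\ell\Lambda_0}$ truncates those same $0$-strings from the top by $\ell$. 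Here I use that $B$ is a tensor product of KR crystals of level bounded by $\ell$, so that tensoring with $u_{\ell\Lambda_0}$ really does leave a nonempty connected crystal and each $0$-string of length $\le\ell$ in $B$ disappears entirely in $B\otimes u_{\ell\Lambda_0}$ — matching the fact that in $\widetilde D_\ell(B)$ such a string has all of its edges removed, leaving isolated vertices that are still present as a set.

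Wait — I should be careful about the underlying sets. The map $\rho_\ell$ is a bijection of underlying sets $B\to B\otimes u_{\ell\Lambda_0}$ simply because the second tensor factor is a single element; no vertices are lost, only edges. Then the content is that $\rho_\ell$ is a crystal morphism up to the global weight shift $\wt\mapsto\wt+\ell\Lambda_0$: for $i\in I_0$ the computation above gives $\rho_\ell(e_ib)=e_i\rho_\ell(b)$, $\rho_\ell(f_ib)=f_i\rho_\ell(b)$, $\varepsilon_i\circ\rho_\ell=\varepsilon_i$, $\varphi_i\circ\rho_\ell=\varphi_i$ on the nose; for $i=0$, an $i$-edge $b'\to b$ is present in $\widetilde D_\ell(B)$ iff $\varepsilon_0(b)>\ell$ iff $\varepsilon_0(b\otimes u_{\ell\Lambda_0})=\varepsilon_0(b)-\ell>0$ iff the corresponding $0$-edge is present in $B\otimes u_{\ell\Lambda_0}$, and in that case $\rho_\ell$ carries it to that edge; finally $\varepsilon_0(\rho_\ell(b))=\varepsilon_0(b)-\ell$ and $\varphi_0(\rho_\ell(b))=\varphi_0(b)+\langle\alpha_0^\vee,\wt(b)\rangle$, which is exactly the $0$-string data of $\widetilde D_\ell(B)$ after the head-truncation, with the weight shifted by $\ell\Lambda_0$. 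Assembling these verifications shows $\rho_\ell$ is an isomorphism of crystals up to the stated weight shift.

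I do not anticipate a genuine obstacle here; this is a direct unwinding of definitions. The only place demanding a little care is the bookkeeping for $i=0$: making sure that "first $\ell$ $0$-arrows in a $0$-string are removed" (equivalently, top $\ell$ vertices' outgoing/incoming $0$-edges deleted) corresponds precisely to $\varepsilon_0(b)>\ell$ under the convention of \eqref{eqn:qbruhat_edge}-style string pictures used in the paper, and that the level-bounded-by-$\ell$ hypothesis is what guarantees $B\otimes u_{\ell\Lambda_0}$ is nonempty and that $u_{\ell\Lambda_0}$ contributes exactly $\ell$ minus-signs (never more) to every $0$-signature, so no further cancellation occurs.
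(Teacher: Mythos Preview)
Your approach is correct and is precisely the paper's: the paper simply says the lemma ``follows immediately from the tensor product rule,'' which is exactly the direct verification you outline. One cosmetic slip to clean up: your formula $\varphi_0(b\otimes u_{\ell\Lambda_0})=\varphi_0(b)+\langle\alpha_0^\vee,\wt(b)\rangle$ should read $\max\bigl(\varphi_0(b),\,\ell+\langle\alpha_0^\vee,\wt(b)\rangle\bigr)$ (your parenthetical via axiom~(1) does recover the right value), and note that the level-bounded-by-$\ell$ hypothesis is not actually used in this verification---$B\otimes u_{\ell\Lambda_0}$ is in bijection with $B$ as a set for any $\ell$, and the signature analysis goes through regardless.
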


\begin{remark}
\label{rem:grading_energy}
Let $\psi \colon \bigoplus_{k=1}^N B_{\lambda^{(k)}} \to B \otimes u_{\ell \Lambda_0}$ be the isomorphism given by Theorem~\ref{thm:demazure_embedding}.
By~\cite[Lemma~7.3]{ST12} and~\cite[Lemma~6.4]{Naoi13}, then there exist constants $(C_k)_{k=1}^N$ such that for all $b \in B_{\lambda^{(k)}}$, if $\wt(b) = \mu + D \delta$ and $\psi(b) = b' \otimes u_{\ell \Lambda_0}$ (note we are considering this as a $U_q(\g)$-crystal and have to implicitly branch to $U_q'(\g)$, which simply changes the weight), then we have $D = E(b') + C_k$, where $E(b)$ is the energy statistic of~\cite{KKMMNN91,HKOTY99,HKOTT02}. These constants were explicitly specified when $B$ was a tensor product of perfect crystals in nonexceptional type in~\cite[Thm.~7.1]{Naoi13}.
\end{remark}

Let $D_{\ell}(B)$ denote the connected component of $\widetilde{D}_{\ell}(B)$ that contains $u_{\min}(B)$, and $DD_{\ell}(B)$ denote the connected component of $\widetilde{DD}_{\ell}(B)$ that contains $u_{\max}(B)$. We now present our main theorem.

\begin{thm}\label{thm:red}
Let $B := \bigotimes_{j=1}^N B^{r_j,s_j}$ and $B' := \bigotimes_{j=1}^{N'} B^{r'_j,s'_j}$ be of levels bounded by $\ell$ and
\begin{equation}
\label{eq:max_weight}
\sum_{j=1}^N s_j \clfw_{r_j} = \sum_{j=1}^{N'} s'_j \clfw_{r'_j}\,.
\end{equation}
Then we have
\[
D_{\ell}(B) \iso D_{\ell}(B')\,,\;\;\;\;\;\;\;\;\;{DD}_{\ell}(B) \iso DD_{\ell}(B')\,.
\]
\end{thm}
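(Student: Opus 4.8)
The plan is to reduce the statement about $D_\ell$ and $DD_\ell$ to the decomposition of Theorem~\ref{thm:demazure_decomposition}, using Lemma~\ref{lemma:demazure_removal} to pass from the ``arrow removal'' picture to the ``$\otimes\, u_{\ell\Lambda_0}$'' picture, and then identifying the relevant connected components via their minimal (resp.\ maximal) elements. By the contragredient duality of Remark~\ref{contr-dual}, it suffices to prove $D_\ell(B) \iso D_\ell(B')$; the statement for $DD_\ell$ follows by interchanging $e_i \leftrightarrow f_i$ throughout.

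First I would apply Lemma~\ref{lemma:demazure_removal} to both $B$ and $B'$, obtaining crystal isomorphisms (up to a weight shift) $\rho_\ell \colon \widetilde{D}_\ell(B) \to B \otimes u_{\ell\Lambda_0}$ and $\rho_\ell \colon \widetilde{D}_\ell(B') \to B' \otimes u_{\ell\Lambda_0}$. Under these, $D_\ell(B)$ is sent isomorphically onto the connected component of $B \otimes u_{\ell\Lambda_0}$ containing $\rho_\ell(u_{\min}(B)) = u_{\min}(B) \otimes u_{\ell\Lambda_0}$, and similarly for $B'$. So it is enough to show that the connected component of $B \otimes u_{\ell\Lambda_0}$ through $u_{\min}(B) \otimes u_{\ell\Lambda_0}$ is isomorphic to the corresponding component of $B' \otimes u_{\ell\Lambda_0}$. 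Now invoke Theorem~\ref{thm:demazure_decomposition}: each of $B \otimes u_{\ell\Lambda_0}$ and $B' \otimes u_{\ell\Lambda_0}$ decomposes as a direct sum of Demazure crystals $\bigoplus_k B_{w^{(k)}}(\Lambda^{(k)})$, where the summands are indexed so that each contains a distinguished element $b^{(k)}_{\min} \otimes u_{\ell\Lambda_0}$ of classical weight $\lambda^{(k)} \in P_0^-$, minimal within that summand in the sense that every other element has weight strictly larger in $Q_0^+$.

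The key point is then to locate $u_{\min}(B) \otimes u_{\ell\Lambda_0}$ among these summands. Since $u_{\min}(B)$ has classical weight $w_0\bigl(\sum_j s_j \clfw_{r_j}\bigr)$, which is antidominant, and since $u_{\min}(B) \otimes u_{\ell\Lambda_0}$ lies in some $B_{\lambda^{(k_0)}}$, I would argue that it must coincide with $b^{(k_0)}_{\min} \otimes u_{\ell\Lambda_0}$: indeed $u_{\min}(B)$ is the unique element of $B$ of its (antidominant) classical weight, and within its summand the element $b^{(k_0)}_{\min}$ is the unique one whose weight is not strictly increased by an element of $Q_0^+$, so an antidominant-weight element (which admits no $Q_0^+$-decrease) can only be $b^{(k_0)}_{\min}$. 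Consequently $\lambda^{(k_0)} = w_0\bigl(\sum_j s_j \clfw_{r_j}\bigr)$, which by hypothesis~\eqref{eq:max_weight} equals $w_0\bigl(\sum_j s'_j \clfw_{r'_j}\bigr)$, the classical weight of $u_{\min}(B')$. The same reasoning identifies $u_{\min}(B') \otimes u_{\ell\Lambda_0}$ with the $b^{(k_0')}_{\min}$-summand of $B' \otimes u_{\ell\Lambda_0}$ having $\lambda^{(k_0')} = \lambda^{(k_0)}$. Finally, by Theorem~\ref{thm:demazure_decomposition} the connected component through $b^{(k_0)}_{\min} \otimes u_{\ell\Lambda_0}$ is the Demazure crystal $B_{w^{(k_0)}}(\Lambda^{(k_0)})$, where $w^{(k_0)}$ has minimal length subject to $w^{(k_0)}(\Lambda^{(k_0)}) = \lambda^{(k_0)} + \ell\Lambda_0$; since this data depends only on $\lambda^{(k_0)}$ and $\ell$ — which agree for $B$ and $B'$ — we conclude $B_{w^{(k_0)}}(\Lambda^{(k_0)}) = B_{w^{(k_0')}}(\Lambda^{(k_0')})$, and hence $D_\ell(B) \iso D_\ell(B')$.

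The main obstacle I anticipate is the bookkeeping around the weight shift in Lemma~\ref{lemma:demazure_removal} and the matching of the affine weights $\Lambda^{(k_0)}$ (as opposed to just the classical weights $\lambda^{(k_0)}$): I need that the antidominant classical weight $\lambda^{(k_0)}$ together with the level $\ell$ determines $\Lambda^{(k_0)} \in P_\ell^+$ and the minimal-length $w^{(k_0)}$ uniquely, which is exactly the content of the second bullet in Theorem~\ref{thm:demazure_decomposition} (writing $w^{(k_0)}(\Lambda^{(k_0)}) = \lambda^{(k_0)} + \ell\Lambda_0$ and taking $w^{(k_0)}$ of minimal length pins down both $\Lambda^{(k_0)} = \bigl(w^{(k_0)}\bigr)^{-1}(\lambda^{(k_0)} + \ell\Lambda_0)$ and the Demazure subword). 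A secondary subtlety is ruling out that $u_{\min}(B) \otimes u_{\ell\Lambda_0}$ lands on a non-minimal element of its summand, which requires the strictness ``$Q_0^+ \setminus \{0\}$'' in Theorem~\ref{thm:demazure_decomposition} together with antidominance of $w_0\bigl(\sum_j s_j\clfw_{r_j}\bigr)$; this is where I would be most careful.
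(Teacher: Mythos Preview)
Your proposal is correct and follows essentially the same approach as the paper: pass to $B \otimes u_{\ell\Lambda_0}$ via Lemma~\ref{lemma:demazure_removal}, invoke the Demazure decomposition of Theorem~\ref{thm:demazure_decomposition}, identify $u_{\min}(B)$ with the distinguished $b^{(k_0)}_{\min}$ in its summand, and observe that the pair $(w^{(k_0)},\Lambda^{(k_0)})$ is determined solely by $w_0(\lambda)+\ell\Lambda_0$, which is common to $B$ and $B'$. Your discussion of the two ``obstacles'' is exactly the content that the paper compresses into a single sentence; neither is a genuine gap.
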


\begin{proof}
Let $\lambda$ be the weight given by~\eqref{eq:max_weight}, and let $\Lambda := w_0(\lambda)+\ell\Lambda_0$.
Note that $\wt\bigl( u_{\min}(B) \bigr) = w_0(\lambda)$.
Since we have $\wt(b) \in w_0(\lambda) + (Q_0^+ \setminus \{0\})$ for all $b \in B \setminus \{u_{\min}(B)\}$, we deduce that $D_{\ell}(B) \iso B_w(\mu)$ for some $\mu \in P_{\ell}^+$ and $w \in W$, by Theorem~\ref{thm:demazure_decomposition} and Lemma~\ref{lemma:demazure_removal}; here $w$ and $\mu$ are uniquely determined by the condition $w(\mu)=\Lambda$ and the fact that $w$ is of minimum length. 
Similarly, we have $D_{\ell}(B') \iso B_w(\mu)$.
Hence, we have $D_{\ell}(B) \iso D_{\ell}(B')$. The proof of the contragredient dual version is completely similar, cf. Remark~\ref{contr-dual}.
\end{proof}

We conjecture that Theorems~\ref{thm:demazure_decomposition} and~\ref{thm:red} also hold in the exceptional types. As evidence, we note that Theorem~\ref{thm:demazure_embedding} is known to hold as $U_q(\g_0)$-crystals in certain cases by~\cite{FL06}, which hence implies $U_q(\g_0)$-crystal versions of Theorems~\ref{thm:demazure_decomposition} and~\ref{thm:red}. Furthermore, Theorem~\ref{thm:demazure_decomposition} is known to hold for $\ell = 1$ in all types~\cite{K05,FL07,Naoi12}.


\section{Uniform models}\label{models}

\subsection{Reduction to single-column KR crystals} The following corollary of Theorem~\ref{thm:red} allows us to reduce tensor products of arbitrary KR crystals to tensor products of single-column ones. 

\begin{cor}
\label{cor:red}
Let $B := \bigotimes_{j=1}^N B^{r_j,s_j}$ and $B' := \bigotimes_{j=1}^{N} (B^{r_j,1})^{\otimes s_j}$ be such that there exists $\ell\in\ZZ$ with $s_j / c_{r_j}=\ell$ for all $j$.
Then we have 
\[
\widetilde{D}_{\ell}(B) \iso D_{\ell}(B')\,,\;\;\;\;\;\;\;\;\;\widetilde{DD}_{\ell}(B) \iso DD_{\ell}(B')\,.
\]
\end{cor}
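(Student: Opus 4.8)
The plan is to deduce Corollary~\ref{cor:red} directly from Theorem~\ref{thm:red} together with Lemma~\ref{lemma:demazure_removal}. First I would observe that the two tensor products $B := \bigotimes_{j=1}^N B^{r_j,s_j}$ and $B' := \bigotimes_{j=1}^N (B^{r_j,1})^{\otimes s_j}$ have the same maximal classical weight: indeed, $u_{\max}(B^{r_j,s_j})$ has classical weight $s_j \clfw_{r_j}$, while each factor $B^{r_j,1}$ contributes $\clfw_{r_j}$, so the block of $s_j$ copies contributes $s_j\clfw_{r_j}$, and summing over $j$ gives the equality~\eqref{eq:max_weight}. Next I would check the level hypothesis: by assumption $s_j/c_{r_j} = \ell$ for all $j$, so $B$ is of level bounded by $\ell$; for $B'$, each single-column factor $B^{r_j,1}$ has $\lceil 1/c_{r_j}\rceil = 1 \le \ell$, so $B'$ is also of level bounded by $\ell$. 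Hence Theorem~\ref{thm:red} applies and yields $D_\ell(B) \iso D_\ell(B')$ and $DD_\ell(B) \iso DD_\ell(B')$.

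The remaining point, and the only place where the specific hypothesis $s_j/c_{r_j}=\ell$ (rather than merely level bounded by $\ell$) is used, is to upgrade $D_\ell(B)$ to $\widetilde{D}_\ell(B)$ on the left-hand side; that is, to show that $\widetilde{D}_\ell(B)$ is already connected, so that $D_\ell(B) = \widetilde{D}_\ell(B)$. By Lemma~\ref{lemma:demazure_removal}, $\widetilde{D}_\ell(B) \iso B \otimes u_{\ell\Lambda_0}$ as crystals (up to weight shift). Since every factor $B^{r_j,s_j}$ satisfies $s_j/c_{r_j} = \ell \in \ZZ$, Theorem~\ref{thm:demazure_embedding} applies to $B$ itself (not just the generalization in Theorem~\ref{thm:demazure_decomposition}), giving $B \otimes u_{\ell\Lambda_0} \iso B_v(\ell\Lambda_{\tau(0)})$, a \emph{single} Demazure crystal, which is connected. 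Therefore $\widetilde{D}_\ell(B)$ is connected and equals its component $D_\ell(B)$ containing $u_{\min}(B)$. The dual statement is handled identically, with $\widetilde{DD}_\ell(B) = DD_\ell(B)$ following from the contragredient dual of Theorem~\ref{thm:demazure_embedding} (cf.~Remark~\ref{contr-dual}). Combining, $\widetilde{D}_\ell(B) = D_\ell(B) \iso D_\ell(B')$ and $\widetilde{DD}_\ell(B) = DD_\ell(B) \iso DD_\ell(B')$.

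I do not expect a genuine obstacle here, since the corollary is essentially bookkeeping on top of Theorem~\ref{thm:red}. The one subtlety worth stating carefully is \emph{why $B'$ is not likewise replaced by its full $\widetilde{D}_\ell(B')$}: the single-column tensor product $B'$ is generally \emph{not} of level exactly $\ell$ (each factor has level $1$), so $B' \otimes u_{\ell\Lambda_0}$ need not be a single Demazure crystal and $\widetilde{D}_\ell(B')$ may be disconnected; one must pass to the connected component $D_\ell(B')$ of $u_{\min}(B')$. Making this asymmetry explicit is the main thing to get right in the write-up. Everything else — the weight match and the level bounds — is routine verification from the definitions in Section~\ref{sec:kr}.
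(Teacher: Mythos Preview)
Your proof is correct and follows essentially the same approach as the paper: use the hypothesis $s_j/c_{r_j}=\ell$ together with Theorem~\ref{thm:demazure_embedding} (equivalently, perfectness of each $B^{r_j,s_j}$ at level $\ell$) to conclude that $\widetilde{D}_\ell(B)$ is connected, hence equals $D_\ell(B)$, and then apply Theorem~\ref{thm:red}. Your write-up is simply more explicit in verifying the weight and level hypotheses and in explaining the asymmetry between $B$ and $B'$.
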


\begin{proof}
Since $s_j/c_{r_j}=\ell\in\ZZ$, all $B^{r_j,s_j}$ are perfect crystals of level $\ell$. This property implies that $\widetilde{D}_{\ell}(B)$ is connected, so $\widetilde{D}_{\ell}(B)=D_{\ell}(B)$, and similarly for the contragredient dual case. Now apply Theorem~\ref{thm:red}. 
\end{proof}

\begin{remarks}\label{rem:red}
\mbox{}
\begin{itemize}
\item[(1)]  The isomorphism in Corollary~\ref{cor:red} realizes the level $\ell$ Demazure and dual Demazure portions of $B$ in terms of single-column KR crystals. 

\item[(2)] We conjecture a similar realization in the exceptional types, assuming the perfectness conjecture. We expect the proof to be completely similar, based on the generalizations of Theorems~\ref{thm:demazure_decomposition} and~\ref{thm:red} that were conjectured in Section~\ref{sec:results}. We have verified this conjecture for $B^{r,s}$ for $s = 2,3,4$ in types $D_4^{(3)}$ (which is perfect for all $s$) and $G_2^{(1)}$. (These cases could follow by a similar proof of Theorem~\ref{thm:demazure_decomposition} using a diagram folding of $D_4^{(1)}$ and the corresponding conjectural virtual crystal construction; see, \textit{e.g.},~\cite{OSS03II,PS15,SchillingS15}.)
\end{itemize}
\end{remarks}

The following natural question arises.

\begin{problem}
\label{prob:isomorphism}
How is the isomorphism in Corollary~\ref{cor:red} expressed concretely when the corresponding tensor products of KR crystals are realized based on the tableau model~\cite{FOS09} and the rigged configuration model~\cite{SchillingS15}?
\end{problem}

One particular approach to Problem~\ref{prob:isomorphism} could be through the use of the so-called \defn{Kirillov--Reshetikhin (KR) tableaux} of~\cite{OSS13,SchillingS15}. These arise from the bijection $\Phi$ with rigged configurations, which use column splitting to construct classical crystal embeddings as a core part of the bijection $\Phi$. Considering the fact that rigged configurations and the Demazure constructions are combinatorial $R$-matrix invariant, as well as the relationship with energy and the affine grading from~\cite{FSS07,KKMMNN92,ST12} (see also Remark~\ref{rem:grading_energy}), it is likely that $\Phi$, and hence KR tableaux, could be a consequence of Theorem~\ref{thm:red}.

\begin{ex}\label{ex-typeC}
In type $C_2^{(1)}$, we have two connected components for $\widetilde{D}_1(B^{1,1} \otimes B^{1,1})$, and a single connected component for $\widetilde{D}_1(B^{1,2})$; see Figure~\ref{fig:C2_ex}, where the crystal vertices are labeled by the corresponding \defn{Kashiwara--Nakashima tableaux}, see~\cite{BS17}. Note that the leftmost connected component of the former crystal is $D_1(B^{1,1} \otimes B^{1,1})$, and this is isomorphic to $D_1(B^{1,2}) = \widetilde{D}_1(B^{1,2})$. The latter is thus realized in terms of single-column KR crystals.

\def\boxelt#1{\begin{array}{|@{\hspace{.6ex}}c@{\hspace{.6ex}}|}\hline \raisebox{-.3ex}{$#1$} \\\hline \end{array}}
Note that the corresponding element of $\emptyset \in \widetilde{D}_1(B^{1,2})$ corresponds to $\boxelt{\bon} \otimes \boxelt{1} \in \widetilde{D}_1(B^{1,1} \otimes B^{1,1})$, which is the splitting of the corresponding KR tableaux in $B^{1,2}$ (the other elements are also Kashiwara--Nakashima tableaux).
\end{ex}

\begin{figure}
\[
\begin{tikzpicture}[>=latex,line join=bevel,yscale=0.7,baseline=0]
\node (node_13) at (133.5bp,228.5bp) [draw,draw=none] {${\def\lr#1{\multicolumn{1}{|@{\hspace{.6ex}}c@{\hspace{.6ex}}|}{\raisebox{-.3ex}{$#1$}}}\raisebox{-.6ex}{$\begin{array}[b]{*{1}c}\cline{1-1}\lr{\overline{1}}\\\cline{1-1}\end{array}$}} \otimes {\def\lr#1{\multicolumn{1}{|@{\hspace{.6ex}}c@{\hspace{.6ex}}|}{\raisebox{-.3ex}{$#1$}}}\raisebox{-.6ex}{$\begin{array}[b]{*{1}c}\cline{1-1}\lr{2}\\\cline{1-1}\end{array}$}}$};
  \node (node_4) at (133.5bp,447.5bp) [draw,draw=none] {${\def\lr#1{\multicolumn{1}{|@{\hspace{.6ex}}c@{\hspace{.6ex}}|}{\raisebox{-.3ex}{$#1$}}}\raisebox{-.6ex}{$\begin{array}[b]{*{1}c}\cline{1-1}\lr{2}\\\cline{1-1}\end{array}$}} \otimes {\def\lr#1{\multicolumn{1}{|@{\hspace{.6ex}}c@{\hspace{.6ex}}|}{\raisebox{-.3ex}{$#1$}}}\raisebox{-.6ex}{$\begin{array}[b]{*{1}c}\cline{1-1}\lr{1}\\\cline{1-1}\end{array}$}}$};
  \node (node_9) at (133.5bp,301.5bp) [draw,draw=none] {${\def\lr#1{\multicolumn{1}{|@{\hspace{.6ex}}c@{\hspace{.6ex}}|}{\raisebox{-.3ex}{$#1$}}}\raisebox{-.6ex}{$\begin{array}[b]{*{1}c}\cline{1-1}\lr{\overline{2}}\\\cline{1-1}\end{array}$}} \otimes {\def\lr#1{\multicolumn{1}{|@{\hspace{.6ex}}c@{\hspace{.6ex}}|}{\raisebox{-.3ex}{$#1$}}}\raisebox{-.6ex}{$\begin{array}[b]{*{1}c}\cline{1-1}\lr{2}\\\cline{1-1}\end{array}$}}$};
  \node (node_8) at (133.5bp,374.5bp) [draw,draw=none] {${\def\lr#1{\multicolumn{1}{|@{\hspace{.6ex}}c@{\hspace{.6ex}}|}{\raisebox{-.3ex}{$#1$}}}\raisebox{-.6ex}{$\begin{array}[b]{*{1}c}\cline{1-1}\lr{\overline{2}}\\\cline{1-1}\end{array}$}} \otimes {\def\lr#1{\multicolumn{1}{|@{\hspace{.6ex}}c@{\hspace{.6ex}}|}{\raisebox{-.3ex}{$#1$}}}\raisebox{-.6ex}{$\begin{array}[b]{*{1}c}\cline{1-1}\lr{1}\\\cline{1-1}\end{array}$}}$};
  \node (node_7) at (19.5bp,155.5bp) [draw,draw=none] {${\def\lr#1{\multicolumn{1}{|@{\hspace{.6ex}}c@{\hspace{.6ex}}|}{\raisebox{-.3ex}{$#1$}}}\raisebox{-.6ex}{$\begin{array}[b]{*{1}c}\cline{1-1}\lr{2}\\\cline{1-1}\end{array}$}} \otimes {\def\lr#1{\multicolumn{1}{|@{\hspace{.6ex}}c@{\hspace{.6ex}}|}{\raisebox{-.3ex}{$#1$}}}\raisebox{-.6ex}{$\begin{array}[b]{*{1}c}\cline{1-1}\lr{\overline{1}}\\\cline{1-1}\end{array}$}}$};
  \node (node_6) at (76.5bp,228.5bp) [draw,draw=none] {${\def\lr#1{\multicolumn{1}{|@{\hspace{.6ex}}c@{\hspace{.6ex}}|}{\raisebox{-.3ex}{$#1$}}}\raisebox{-.6ex}{$\begin{array}[b]{*{1}c}\cline{1-1}\lr{2}\\\cline{1-1}\end{array}$}} \otimes {\def\lr#1{\multicolumn{1}{|@{\hspace{.6ex}}c@{\hspace{.6ex}}|}{\raisebox{-.3ex}{$#1$}}}\raisebox{-.6ex}{$\begin{array}[b]{*{1}c}\cline{1-1}\lr{\overline{2}}\\\cline{1-1}\end{array}$}}$};
  \node (node_5) at (76.5bp,301.5bp) [draw,draw=none] {${\def\lr#1{\multicolumn{1}{|@{\hspace{.6ex}}c@{\hspace{.6ex}}|}{\raisebox{-.3ex}{$#1$}}}\raisebox{-.6ex}{$\begin{array}[b]{*{1}c}\cline{1-1}\lr{2}\\\cline{1-1}\end{array}$}} \otimes {\def\lr#1{\multicolumn{1}{|@{\hspace{.6ex}}c@{\hspace{.6ex}}|}{\raisebox{-.3ex}{$#1$}}}\raisebox{-.6ex}{$\begin{array}[b]{*{1}c}\cline{1-1}\lr{2}\\\cline{1-1}\end{array}$}}$};
  \node (node_14) at (133.5bp,155.5bp) [draw,draw=none] {${\def\lr#1{\multicolumn{1}{|@{\hspace{.6ex}}c@{\hspace{.6ex}}|}{\raisebox{-.3ex}{$#1$}}}\raisebox{-.6ex}{$\begin{array}[b]{*{1}c}\cline{1-1}\lr{\overline{1}}\\\cline{1-1}\end{array}$}} \otimes {\def\lr#1{\multicolumn{1}{|@{\hspace{.6ex}}c@{\hspace{.6ex}}|}{\raisebox{-.3ex}{$#1$}}}\raisebox{-.6ex}{$\begin{array}[b]{*{1}c}\cline{1-1}\lr{\overline{2}}\\\cline{1-1}\end{array}$}}$};
  \node (node_3) at (19.5bp,228.5bp) [draw,draw=none] {${\def\lr#1{\multicolumn{1}{|@{\hspace{.6ex}}c@{\hspace{.6ex}}|}{\raisebox{-.3ex}{$#1$}}}\raisebox{-.6ex}{$\begin{array}[b]{*{1}c}\cline{1-1}\lr{1}\\\cline{1-1}\end{array}$}} \otimes {\def\lr#1{\multicolumn{1}{|@{\hspace{.6ex}}c@{\hspace{.6ex}}|}{\raisebox{-.3ex}{$#1$}}}\raisebox{-.6ex}{$\begin{array}[b]{*{1}c}\cline{1-1}\lr{\overline{1}}\\\cline{1-1}\end{array}$}}$};
  \node (node_2) at (19.5bp,301.5bp) [draw,draw=none] {${\def\lr#1{\multicolumn{1}{|@{\hspace{.6ex}}c@{\hspace{.6ex}}|}{\raisebox{-.3ex}{$#1$}}}\raisebox{-.6ex}{$\begin{array}[b]{*{1}c}\cline{1-1}\lr{1}\\\cline{1-1}\end{array}$}} \otimes {\def\lr#1{\multicolumn{1}{|@{\hspace{.6ex}}c@{\hspace{.6ex}}|}{\raisebox{-.3ex}{$#1$}}}\raisebox{-.6ex}{$\begin{array}[b]{*{1}c}\cline{1-1}\lr{\overline{2}}\\\cline{1-1}\end{array}$}}$};
  \node (node_1) at (19.5bp,374.5bp) [draw,draw=none] {${\def\lr#1{\multicolumn{1}{|@{\hspace{.6ex}}c@{\hspace{.6ex}}|}{\raisebox{-.3ex}{$#1$}}}\raisebox{-.6ex}{$\begin{array}[b]{*{1}c}\cline{1-1}\lr{1}\\\cline{1-1}\end{array}$}} \otimes {\def\lr#1{\multicolumn{1}{|@{\hspace{.6ex}}c@{\hspace{.6ex}}|}{\raisebox{-.3ex}{$#1$}}}\raisebox{-.6ex}{$\begin{array}[b]{*{1}c}\cline{1-1}\lr{2}\\\cline{1-1}\end{array}$}}$};
  \node (node_10) at (76.5bp,155.5bp) [draw,draw=none] {${\def\lr#1{\multicolumn{1}{|@{\hspace{.6ex}}c@{\hspace{.6ex}}|}{\raisebox{-.3ex}{$#1$}}}\raisebox{-.6ex}{$\begin{array}[b]{*{1}c}\cline{1-1}\lr{\overline{2}}\\\cline{1-1}\end{array}$}} \otimes {\def\lr#1{\multicolumn{1}{|@{\hspace{.6ex}}c@{\hspace{.6ex}}|}{\raisebox{-.3ex}{$#1$}}}\raisebox{-.6ex}{$\begin{array}[b]{*{1}c}\cline{1-1}\lr{\overline{2}}\\\cline{1-1}\end{array}$}}$};
  \node (node_11) at (47.5bp,82.5bp) [draw,draw=none] {${\def\lr#1{\multicolumn{1}{|@{\hspace{.6ex}}c@{\hspace{.6ex}}|}{\raisebox{-.3ex}{$#1$}}}\raisebox{-.6ex}{$\begin{array}[b]{*{1}c}\cline{1-1}\lr{\overline{2}}\\\cline{1-1}\end{array}$}} \otimes {\def\lr#1{\multicolumn{1}{|@{\hspace{.6ex}}c@{\hspace{.6ex}}|}{\raisebox{-.3ex}{$#1$}}}\raisebox{-.6ex}{$\begin{array}[b]{*{1}c}\cline{1-1}\lr{\overline{1}}\\\cline{1-1}\end{array}$}}$};
  \node (node_0) at (48.5bp,447.5bp) [draw,draw=none] {${\def\lr#1{\multicolumn{1}{|@{\hspace{.6ex}}c@{\hspace{.6ex}}|}{\raisebox{-.3ex}{$#1$}}}\raisebox{-.6ex}{$\begin{array}[b]{*{1}c}\cline{1-1}\lr{1}\\\cline{1-1}\end{array}$}} \otimes {\def\lr#1{\multicolumn{1}{|@{\hspace{.6ex}}c@{\hspace{.6ex}}|}{\raisebox{-.3ex}{$#1$}}}\raisebox{-.6ex}{$\begin{array}[b]{*{1}c}\cline{1-1}\lr{1}\\\cline{1-1}\end{array}$}}$};
  \node (node_15) at (47.5bp,9.5bp) [draw,draw=none] {${\def\lr#1{\multicolumn{1}{|@{\hspace{.6ex}}c@{\hspace{.6ex}}|}{\raisebox{-.3ex}{$#1$}}}\raisebox{-.6ex}{$\begin{array}[b]{*{1}c}\cline{1-1}\lr{\overline{1}}\\\cline{1-1}\end{array}$}} \otimes {\def\lr#1{\multicolumn{1}{|@{\hspace{.6ex}}c@{\hspace{.6ex}}|}{\raisebox{-.3ex}{$#1$}}}\raisebox{-.6ex}{$\begin{array}[b]{*{1}c}\cline{1-1}\lr{\overline{1}}\\\cline{1-1}\end{array}$}}$};
  \node (node_12) at (76.5bp,374.5bp) [draw,draw=none] {${\def\lr#1{\multicolumn{1}{|@{\hspace{.6ex}}c@{\hspace{.6ex}}|}{\raisebox{-.3ex}{$#1$}}}\raisebox{-.6ex}{$\begin{array}[b]{*{1}c}\cline{1-1}\lr{\overline{1}}\\\cline{1-1}\end{array}$}} \otimes {\def\lr#1{\multicolumn{1}{|@{\hspace{.6ex}}c@{\hspace{.6ex}}|}{\raisebox{-.3ex}{$#1$}}}\raisebox{-.6ex}{$\begin{array}[b]{*{1}c}\cline{1-1}\lr{1}\\\cline{1-1}\end{array}$}}$};
  \draw [red,->] (node_7) ..controls (23.781bp,136.53bp) and (27.579bp,122.01bp)  .. (32.5bp,110.0bp) .. controls (33.718bp,107.03bp) and (35.173bp,103.96bp)  .. (node_11);
  \definecolor{strokecol}{rgb}{0.0,0.0,0.0};
  \pgfsetstrokecolor{strokecol}
  \draw (41.0bp,119.0bp) node {$2$};
  \draw [blue,->] (node_2) ..controls (19.5bp,280.98bp) and (19.5bp,262.65bp)  .. (node_3);
  \draw (28.0bp,265.0bp) node {$1$};
  \draw [blue,->] (node_1) ..controls (35.948bp,353.43bp) and (51.267bp,333.82bp)  .. (node_5);
  \draw (64.0bp,338.0bp) node {$1$};
  \draw [red,->] (node_6) ..controls (76.5bp,207.98bp) and (76.5bp,189.65bp)  .. (node_10);
  \draw (85.0bp,192.0bp) node {$2$};
  \draw [blue,->] (node_8) ..controls (133.5bp,353.98bp) and (133.5bp,335.65bp)  .. (node_9);
  \draw (142.0bp,338.0bp) node {$1$};
  \draw [red,->] (node_1) ..controls (19.5bp,353.98bp) and (19.5bp,335.65bp)  .. (node_2);
  \draw (28.0bp,338.0bp) node {$2$};
  \draw [red,->] (node_4) ..controls (133.5bp,426.98bp) and (133.5bp,408.65bp)  .. (node_8);
  \draw (142.0bp,411.0bp) node {$2$};
  \draw [black,<-] (node_0) ..controls (61.32bp,414.08bp) and (68.456bp,395.47bp)  .. (node_12);
  \draw (75.0bp,411.0bp) node {$0$};
  \draw [red,->] (node_13) ..controls (133.5bp,207.98bp) and (133.5bp,189.65bp)  .. (node_14);
  \draw (142.0bp,192.0bp) node {$2$};
  \draw [blue,->] (node_11) ..controls (47.5bp,61.978bp) and (47.5bp,43.649bp)  .. (node_15);
  \draw (56.0bp,46.0bp) node {$1$};
  \draw [blue,->] (node_9) ..controls (133.5bp,280.98bp) and (133.5bp,262.65bp)  .. (node_13);
  \draw (142.0bp,265.0bp) node {$1$};
  \draw [blue,->] (node_0) ..controls (40.261bp,426.76bp) and (32.776bp,407.92bp)  .. (node_1);
  \draw (46.0bp,411.0bp) node {$1$};
  \draw [blue,->] (node_10) ..controls (68.261bp,134.76bp) and (60.776bp,115.92bp)  .. (node_11);
  \draw (74.0bp,119.0bp) node {$1$};
  \draw [red,->] (node_5) ..controls (76.5bp,280.98bp) and (76.5bp,262.65bp)  .. (node_6);
  \draw (85.0bp,265.0bp) node {$2$};
  \draw [blue,->] (node_3) ..controls (19.5bp,207.98bp) and (19.5bp,189.65bp)  .. (node_7);
  \draw (28.0bp,192.0bp) node {$1$};
\end{tikzpicture}
\hspace{70pt}
\begin{tikzpicture}[>=latex,line join=bevel,yscale=0.7,baseline=0]
\node (node_9) at (35.5bp,82.5bp) [draw,draw=none] {${\def\lr#1{\multicolumn{1}{|@{\hspace{.6ex}}c@{\hspace{.6ex}}|}{\raisebox{-.3ex}{$#1$}}}\raisebox{-.6ex}{$\begin{array}[b]{*{2}c}\cline{1-2}\lr{\overline{2}}&\lr{\overline{1}}\\\cline{1-2}\end{array}$}}$};
  \node (node_8) at (13.5bp,155.5bp) [draw,draw=none] {${\def\lr#1{\multicolumn{1}{|@{\hspace{.6ex}}c@{\hspace{.6ex}}|}{\raisebox{-.3ex}{$#1$}}}\raisebox{-.6ex}{$\begin{array}[b]{*{2}c}\cline{1-2}\lr{\overline{2}}&\lr{\overline{2}}\\\cline{1-2}\end{array}$}}$};
  \node (node_7) at (58.5bp,155.5bp) [draw,draw=none] {${\def\lr#1{\multicolumn{1}{|@{\hspace{.6ex}}c@{\hspace{.6ex}}|}{\raisebox{-.3ex}{$#1$}}}\raisebox{-.6ex}{$\begin{array}[b]{*{2}c}\cline{1-2}\lr{2}&\lr{\overline{1}}\\\cline{1-2}\end{array}$}}$};
  \node (node_6) at (13.5bp,228.5bp) [draw,draw=none] {${\def\lr#1{\multicolumn{1}{|@{\hspace{.6ex}}c@{\hspace{.6ex}}|}{\raisebox{-.3ex}{$#1$}}}\raisebox{-.6ex}{$\begin{array}[b]{*{2}c}\cline{1-2}\lr{2}&\lr{\overline{2}}\\\cline{1-2}\end{array}$}}$};
  \node (node_5) at (13.5bp,301.5bp) [draw,draw=none] {${\def\lr#1{\multicolumn{1}{|@{\hspace{.6ex}}c@{\hspace{.6ex}}|}{\raisebox{-.3ex}{$#1$}}}\raisebox{-.6ex}{$\begin{array}[b]{*{2}c}\cline{1-2}\lr{2}&\lr{2}\\\cline{1-2}\end{array}$}}$};
  \node (node_4) at (58.5bp,228.5bp) [draw,draw=none] {${\def\lr#1{\multicolumn{1}{|@{\hspace{.6ex}}c@{\hspace{.6ex}}|}{\raisebox{-.3ex}{$#1$}}}\raisebox{-.6ex}{$\begin{array}[b]{*{2}c}\cline{1-2}\lr{1}&\lr{\overline{1}}\\\cline{1-2}\end{array}$}}$};
  \node (node_3) at (58.5bp,301.5bp) [draw,draw=none] {${\def\lr#1{\multicolumn{1}{|@{\hspace{.6ex}}c@{\hspace{.6ex}}|}{\raisebox{-.3ex}{$#1$}}}\raisebox{-.6ex}{$\begin{array}[b]{*{2}c}\cline{1-2}\lr{1}&\lr{\overline{2}}\\\cline{1-2}\end{array}$}}$};
  \node (node_2) at (35.5bp,374.5bp) [draw,draw=none] {${\def\lr#1{\multicolumn{1}{|@{\hspace{.6ex}}c@{\hspace{.6ex}}|}{\raisebox{-.3ex}{$#1$}}}\raisebox{-.6ex}{$\begin{array}[b]{*{2}c}\cline{1-2}\lr{1}&\lr{2}\\\cline{1-2}\end{array}$}}$};
  \node (node_1) at (55.5bp,447.5bp) [draw,draw=none] {${\def\lr#1{\multicolumn{1}{|@{\hspace{.6ex}}c@{\hspace{.6ex}}|}{\raisebox{-.3ex}{$#1$}}}\raisebox{-.6ex}{$\begin{array}[b]{*{2}c}\cline{1-2}\lr{1}&\lr{1}\\\cline{1-2}\end{array}$}}$};
  \node (node_10) at (35.5bp,9.5bp) [draw,draw=none] {${\def\lr#1{\multicolumn{1}{|@{\hspace{.6ex}}c@{\hspace{.6ex}}|}{\raisebox{-.3ex}{$#1$}}}\raisebox{-.6ex}{$\begin{array}[b]{*{2}c}\cline{1-2}\lr{\overline{1}}&\lr{\overline{1}}\\\cline{1-2}\end{array}$}}$};
  \node (node_0) at (75.5bp,374.5bp) [draw,draw=none] {${\emptyset}$};
  \draw [red,->] (node_2) ..controls (42.0bp,353.87bp) and (47.855bp,335.29bp)  .. (node_3);
  \definecolor{strokecol}{rgb}{0.0,0.0,0.0};
  \pgfsetstrokecolor{strokecol}
  \draw (58.0bp,338.0bp) node {$2$};
  \draw [blue,->] (node_4) ..controls (58.5bp,207.98bp) and (58.5bp,189.65bp)  .. (node_7);
  \draw (67.0bp,192.0bp) node {$1$};
  \draw [blue,->] (node_9) ..controls (35.5bp,61.978bp) and (35.5bp,43.649bp)  .. (node_10);
  \draw (44.0bp,46.0bp) node {$1$};
  \draw [blue,->] (node_8) ..controls (12.835bp,136.4bp) and (13.397bp,121.84bp)  .. (17.5bp,110.0bp) .. controls (18.621bp,106.77bp) and (20.206bp,103.54bp)  .. (node_9);
  \draw (26.0bp,119.0bp) node {$1$};
  \draw [blue,->] (node_1) ..controls (46.708bp,432.78bp) and (43.438bp,426.24bp)  .. (41.5bp,420.0bp) .. controls (38.934bp,411.73bp) and (37.465bp,402.29bp)  .. (node_2);
  \draw (50.0bp,411.0bp) node {$1$};
  \draw [black,<-] (node_1) ..controls (64.967bp,412.95bp) and (70.482bp,392.82bp)  .. (node_0);
  \draw (76.0bp,411.0bp) node {$0$};
  \draw [blue,->] (node_3) ..controls (58.5bp,280.98bp) and (58.5bp,262.65bp)  .. (node_4);
  \draw (67.0bp,265.0bp) node {$1$};
  \draw [red,->] (node_7) ..controls (52.0bp,134.87bp) and (46.145bp,116.29bp)  .. (node_9);
  \draw (58.0bp,119.0bp) node {$2$};
  \draw [blue,->] (node_2) ..controls (23.838bp,359.92bp) and (19.742bp,353.47bp)  .. (17.5bp,347.0bp) .. controls (14.679bp,338.86bp) and (13.532bp,329.44bp)  .. (node_5);
  \draw (26.0bp,338.0bp) node {$1$};
  \draw [red,->] (node_5) ..controls (13.5bp,280.98bp) and (13.5bp,262.65bp)  .. (node_6);
  \draw (22.0bp,265.0bp) node {$2$};
  \draw [red,->] (node_6) ..controls (13.5bp,207.98bp) and (13.5bp,189.65bp)  .. (node_8);
  \draw (22.0bp,192.0bp) node {$2$};
\end{tikzpicture}
\]
\caption{The crystals $\widetilde{D}_1(B^{1,1} \otimes B^{1,1})$ (left) and $\widetilde{D}_1(B^{1,2})$ (right) in Example~\ref{ex-typeC}.}
\label{fig:C2_ex}
\end{figure}

\subsection{The quantum alcove model}

We now recall the quantum alcove model and the main results related to it. For more details, including examples, we refer to the relevant papers \cite{LL15,LNSSS14II,LL18}. The setup is that of a finite root system $\Phi_0$ of rank $r$ and its Weyl group $W_0$, but it also includes the associated alcove picture. We denote by $\theta$ the highest root in $\Phi_0$, and let $\alpha_0 := -\theta$. Also, let $[m] := \{1, 2, \dotsc, m\}$ and $\mathfrak{h}_{\reals} := \reals \otimes P$.

Consider the affine hyperplanes $H_{\beta,k} := \{ \lambda \in \mathfrak{h}_{\reals} \mid \inner{\lambda}{\beta^{\vee}} = k\}$. Recall an \defn{alcove} is a connected component of $\mathfrak{h}_{\reals} \setminus \left( \bigcup_{\beta \in \Phi_0} \bigcup_{k\in\ZZ} H_{\beta,k} \right)$, and the \defn{fundamental alcove} is
\[
A_{\circ} := \{ \lambda \in \mathfrak{h}_{\reals} \mid 0 < \inner{\lambda}{\alpha_i^{\vee}} < 1 \text{ for all } i \in I_0 \}.
\]
We say that two alcoves are \defn{adjacent} if they are distinct and have a common wall. Given a pair of adjacent alcoves $A$ and $B$, we write $A \stackrel{\beta}{\longrightarrow} B$ if the
common wall is contained in the affine hyperplane $H_{\beta,k}$, for some $k \in \ZZ$, and the root
$\beta \in \Phi$ points in the direction from $A$ to $B$.

	An  \defn{alcove path} is a {sequence of alcoves} $(A_0, A_1, \dotsc, A_m)$ such that
	$A_{j-1}$ and $A_j$ are adjacent, for $j=1,\dotsc, m.$ We say that an alcove path 
	is \defn{reduced} if it has minimal length among all alcove paths from $A_0$ to $A_m$.
	Let $A_{\lambda} = A_{\circ}+\lambda$ be the translation of the fundamental alcove $A_{\circ}$ by the weight $\lambda$.
	
	The sequence of roots $(\beta_1, \beta_2, \dotsc, \beta_m)$ is called a
	\defn{$\lambda$-chain} if 
	\[	
	A_0=A_{\circ} \xrightarrow[\hspace{25pt}]{-\beta_1} A_1
	\xrightarrow[\hspace{25pt}]{-\beta_2} \cdots
	\xrightarrow[\hspace{25pt}]{-\beta_m} A_m = A_{-\lambda}
	\]
	is a reduced alcove path.

We now fix a dominant weight $\lambda$ and an alcove path $\Pi = (A_0, \dotsc, A_m)$ from 
$A_0 = A_{\circ}$ to $A_m = A_{-\lambda}$. Note that $\Pi$ is determined by the corresponding $\lambda$-chain $\Gamma := (\beta_1, \dots, \beta_m)$, which consists of positive roots. 
We let $r_i : =s_{\beta_i}$, and let $\widehat{r_i}$ be the affine reflection in the hyperplane containing the common face of $A_{i-1}$ and $A_i$, for $i = 1, \dotsc, m$; in other words, 
$\widehat{r}_i := s_{\beta_i,-l_i}$, where $l_i := \left\lvert \left\{ j < i \mid \beta_j = \beta_i \right\} \right\rvert$. 
We define $\widetilde{l}_i := \inner{\lambda}{\beta_i^{\vee}} -l_i = \left\lvert \left\{ j \geq i \mid \beta_j = \beta_i \right\} \right\rvert$.

Let $J = \left\{ j_1 < j_2 < \cdots < j_s \right\}$  be a subset of $[m]$. The elements of $J$ are called \defn{folding positions}. We fold $\Pi$ in the hyperplanes corresponding to these positions and obtain a folded path. Like $\Pi$,  the folded path can be recorded by a sequence of roots, namely 
$\Gamma(J) = \left( \gamma_1,\gamma_2, \dotsc, \gamma_m \right)$, where 
	 \begin{equation}\label{defw}
	 \gamma_k:=r_{j_1}r_{j_2} \dotsm r_{j_p}(\beta_k)\,,
	 \end{equation}
	  with $j_p$ the 
	 largest folding position less than $k$. 
	 We define $\gamma_{\infty} := r_{j_1} r_{j_2} \dotsm r_{j_s}(\rho)$.
	 Upon folding, the hyperplane separating the alcoves $A_{k-1}$ and $A_k$ in $\Pi$ is mapped to 
	\begin{equation}\label{deflev}
	H_{\lvert\gamma_k\rvert,-\l{k}} = \widehat{r}_{j_1}\widehat{r}_{j_2} \dotsm \widehat{r}_{j_p}(H_{\beta_k,-l_k})\,,
	\end{equation}
	for some $\l{k}$, which is defined by this relation.

	Given $i \in J$, we  say that $i$ is a \defn{positive folding position} if 
	 $\gamma_i>0$, and a \defn{negative folding position} if $\gamma_i<0$. 
	 We denote the positive folding positions by $J^{+}$, and the negative 
	 ones by $J^{-}$.
	 We call 
	$\wt(J) := -\widehat{r}_{j_1}\widehat{r}_{j_2} \dotsm \widehat{r}_{j_s}(-\lambda)$ the 
	\defn{weight} of $J$.

	A subset $J = \left\{ j_1 < j_2 < \cdots < j_s \right\} \subseteq [m]$ (possibly empty)
 is an \defn{admissible subset} if
we have the following path in the quantum Bruhat graph on $W_0$:
\begin{equation}
	\label{eqn:admissible}
	1 \xrightarrow[\hspace{25pt}]{\beta_{j_1}} r_{j_1} \xrightarrow[\hspace{25pt}]{\beta_{j_2}} r_{j_1}r_{j_2} 
	\xrightarrow[\hspace{25pt}]{\beta_{j_3}} \cdots \xrightarrow[\hspace{25pt}]{\beta_{j_s}} r_{j_1}r_{j_2}\cdots r_{j_s}=:\phi(J)\,.
\end{equation}
We call $\Gamma(J)$ an \defn{admissible folding} and $\phi(J)$ its \defn{final direction}. 
We let $\A(\Gamma)$ be the collection of admissible subsets.

\begin{remark}\label{spec}
Positive and negative folding positions correspond to up and down steps (in Bruhat order) in the chain~\eqref{eqn:admissible}, respectively.
\end{remark}

We now define the crystal operators on $\A( \Gamma )$. 
Given $J\subseteq [m]$ and
$\alpha\in \Phi$, we will use the following notation:
	\[ 
	 I_\alpha = I_{\alpha}(J) := \left\{ i \in [m] \: \mid \: \gamma_i = \pm \alpha \right\}\,, \qquad 
\widehat{I}_\alpha = \widehat{I}_{\alpha}(J) := I_{\alpha} \cup \{\infty\}\,, 
\]
and $l_{\alpha}^{\infty} := \inner{{\rm wt}(J)}{\sgn(\alpha)\alpha^{\vee}}$.
The following graphical representation of the heights $l_i^J$ for $i\in{I}_\alpha$ and $l_{\alpha}^{\infty}$ 
is useful for defining the crystal operators.
Let 
\[
\widehat{I}_{\alpha}= \left\{ i_1 < i_2 < \dots < i_n <i_{n+1}=\infty \right\}
	\, \text{ and  } \,
	\epsilon_i := 
	\begin{cases}
		\,\,\,\, 1 &\text{ if } i \notin J\,, \\
		-1 & \text { if } i \in J\,.
	\end{cases}
\]
If $\alpha > 0$, we define the continuous piecewise linear function 
$g_{\alpha} \colon \left[ 0, n+\frac{1}{2} \right] \to \reals$ by
\begin{equation}
	\label{eqn:piecewise-linear_graph}
	g_\alpha(0)= -\frac{1}{2}, \qquad g'_{\alpha}(x)=
	\begin{cases}
		\sgn(\gamma_{i_k}) & \text{if } x \in (k-1,k-\frac{1}{2}),\, k = 1, \dotsc, n,\\
		\epsilon_{i_k}\sgn(\gamma_{i_k}) & 
		\text{if } x \in (k-\frac{1}{2},k),\, k=1,\dotsc,n, \\
		\sgn(\inner{\gamma_{\infty}}{\alpha^{\vee}}) &
		\text{if } x \in (n,n+\frac{1}{2}).
	\end{cases}
\end{equation}
If $\alpha<0$, we define $g_{\alpha}$ to be the graph obtained by reflecting $g_{-\alpha}$ in
the $x$-axis.
For any $\alpha$ we have 
\begin{equation}
	\label{eqn:graph_height}
	\sgn(\alpha)\l{i_k} = g_{\alpha}\!\left(k-\frac{1}{2}\right), \, k = 1, \dotsc, n, \ 
	\text{ and } \ 
	\sgn(\alpha)l_{\alpha}^{\infty}:=
	\inner{{\rm wt}(J)}{\alpha^{\vee}} = g_{\alpha}\!\left(n+\frac{1}{2}\right).
\end{equation}

Let $J$ be an admissible subset. 
Let $\delta_{i,j}$ be the Kronecker delta function.
Fix $p$ in $\{0, \dotsc, r\}$, so $\alpha_p$ is a simple root if $p>0$, or $-\theta$ if $p=0$.
Let $M$ be the maximum of $g_{{\alpha}_p}$, which is known to be a nonnegative integer.
Let $m := \min\{ i \in \widehat{I}_{{\alpha}_p} \mid \sgn({\alpha}_p)\l{i} = M \}$.
It turns out that, if $M \geq \delta_{p,0}$, then we have either $m\in J$ or $m=\infty$; furthermore, if $M > \delta_{p,0}$, then $m$ has a predecessor $k$ in $\widehat{I}_{{\alpha}_p}$ and $k \notin J$. We define
\begin{equation}
	\label{eqn:rootF} 
	f_p(J):= 
	\begin{cases}
		(J \setminus \left\{ m \right\}) \cup \{ k \} & \text{if $M>\delta_{p,0}$}\,, \\
				\Bzero & \text{otherwise}\,.
	\end{cases}
\end{equation}
Now we define $e_p$.
Assuming that $M > \inner{\wt(J)}{\alpha_p^{\vee}}$, let $k := \max\{i \in I_{\alpha_p} \mid \sgn(\alpha_p) \l{i} = M\}$,
and let $m$ be the successor of $k$ in $\widehat{I}_{{\alpha}_p}$. Assuming also that $M\ge\delta_{p,0}$, it turns out that we have $k\in J$ and either $m\not\in J$ or $m=\infty$. 
Define 
\begin{equation}
	\label{eqn:rootE}
	e_p(J):= 
	\begin{cases}
		(J \setminus \left\{ k \right\}) \cup \{ m \} & \text{if }
		M > \inner{\wt(J)}{\alpha_p^{\vee}} \text{ and } M \geq \delta_{p,0}  \\
				\Bzero & \text{otherwise}\,.
	\end{cases}
\end{equation}
In the above definitions, we use the
convention that $J \setminus \left\{ \infty \right\} = J \cup \left\{ \infty \right\} = J$. 

We recall one of the main results in~\cite{LNSSS14II}, cf.\ also~\cite{LNSSS14,LL18}. In the setup of untwisted affine root systems,  consider the tensor product of KR crystals $B = \bigotimes_{j=1}^{N} B^{p_j,1}$. Let $\lambda = \omega_{p_1} + \cdots + \omega_{p_N}$, and let $\Gamma$ be \emph{any} $\lambda$-chain. 

\begin{thm}[{\cite{LNSSS14II,LL18}}]\label{mainconj} 
	The (abstract) crystal $\A(\Gamma)$ is isomorphic to $\widetilde{DD}_1(B)$ via a specific weight-preserving bijection $\Psi$. 
\end{thm}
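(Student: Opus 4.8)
The statement is the main theorem of the quantum alcove model, so the plan is to reproduce the strategy of~\cite{LNSSS14II,LL18}, organized around three structural facts about $\A(\Gamma)$: (i) $\A(\Gamma)$ depends, up to a weight-preserving crystal isomorphism, only on $\lambda$ and not on the particular $\lambda$-chain $\Gamma$; (ii) concatenation of chains corresponds to tensoring, i.e., if a $(\mu+\nu)$-chain $\Gamma$ is obtained by juxtaposing a $\mu$-chain with a $\nu$-chain, then $\A(\Gamma)$ is isomorphic, as a $U_q(\g)$-crystal, to the tensor product of the two smaller models; and (iii) a single fundamental-weight chain $\Gamma_{\omega_p}$ models the single-column crystal $\widetilde{DD}_1(B^{p,1})$ (this is the $B=B^{p,1}$ case of the theorem itself). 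Granting these, for $\lambda = \omega_{p_1} + \cdots + \omega_{p_N}$ I would take a $\lambda$-chain $\Gamma_0$ obtained by concatenating $\omega_{p_j}$-chains, iterate~(ii) and apply~(iii) to identify $\A(\Gamma_0)$ with the corresponding iterated tensor product of the crystals $\widetilde{DD}_1(B^{p_j,1})$, then use~(i) to transport this isomorphism to an arbitrary $\lambda$-chain $\Gamma$, and finally reconcile the iterated tensor product with $\widetilde{DD}_1(B)$ by means of the contragredient dual of Lemma~\ref{lemma:demazure_removal}, which realizes the removal of the non-level $1$ dual Demazure arrows as tensoring on the appropriate side by a fixed extremal vector, an operation one checks is compatible with the concatenation in~(ii) up to the usual weight shift. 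The bijection $\Psi$ is then the composite of the bijections produced along the way.

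For~(i), all $\lambda$-chains here are reduced alcove paths between the same pair of alcoves $A_\circ$ and $A_{-\lambda}$, and any two are connected by a sequence of elementary Yang--Baxter moves --- local modifications of the path confined to a rank-two subsystem of $\Phi$. I would verify that each such move induces a weight-preserving bijection of admissible subsets commuting with all $e_p$ and $f_p$; by the rank-two confinement this is a finite, mechanical check over the patterns $A_1 \times A_1$, $A_2$, $B_2$, $G_2$, the real work being to track how the folding positions, the heights $l_i^J$, the functions $g_{\alpha_p}$, and especially the quantum Bruhat graph admissibility condition~\eqref{eqn:admissible} (including the quantum edges of~\eqref{eqn:qbruhat_edge}) transform. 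For~(ii), unwinding the definitions, an admissible subset for $\Gamma$ splits into a part $J_\mu$ on the $\mu$-segment and a part $J_\nu$ on the $\nu$-segment, the graph $g_{\alpha_p}$ for $\Gamma$ is the graph for the $\mu$-segment followed by a vertically shifted copy of the graph for the $\nu$-segment beginning where the former ends, and comparing the maximum $M$ and the distinguished position used in~\eqref{eqn:rootF}--\eqref{eqn:rootE} against the two segments reproduces exactly the comparison of $\varepsilon_p$ with $\varphi_p$ in the signature rule. For~(iii) I would exhibit an explicit order-preserving bijection between admissible subsets of a convenient $\omega_p$-chain and the Kashiwara--Nakashima column tableaux describing $B^{p,1}$, and check it intertwines the crystal operators; since the columns are short this is a quick and essentially uniform verification.

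I expect the main obstacle to be~(i): the Yang--Baxter moves must be shown to preserve all of the combinatorial data entering~\eqref{eqn:rootF}--\eqref{eqn:rootE}, and the presence of the quantum (down) edges in~\eqref{eqn:qbruhat_edge} makes the rank-two bookkeeping genuinely more delicate than in the classical (non-quantum) alcove model. A conceptually cleaner route --- which I would in any case use for the reconciliation with $\widetilde{DD}_1(B)$ --- passes through the path model: $\A(\Gamma)$ is isomorphic, again via~(i), to the crystal of quantum Lakshmibai--Seshadri paths of shape $\lambda$; that crystal is the explicit untwisted-type description of Naito--Sagaki's crystal of projected level-zero LS paths; and the latter is isomorphic to $\widetilde{DD}_1(B)$ for $B = \bigotimes_{j=1}^N B^{p_j,1}$ by Naito--Sagaki's theorem. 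Either route yields the claim; the LS-path route dispatches the $\widetilde{DD}_1$ bookkeeping in a single step, at the cost of importing the extremal-weight-module input behind Naito--Sagaki's result.
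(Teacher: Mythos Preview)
The paper does not prove this theorem at all: it is stated as a result \emph{recalled} from~\cite{LNSSS14II,LL18} (see the sentence immediately preceding the statement: ``We recall one of the main results in~\cite{LNSSS14II}\ldots''), and no proof is supplied. So there is nothing to compare your proposal against in this paper.

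That said, your sketch is a faithful outline of the strategy actually used in the cited references. The three structural ingredients --- independence of the chosen $\lambda$-chain via quantum Yang--Baxter moves, compatibility of concatenation with tensor product, and the single-column base case --- are exactly the architecture of~\cite{LNSSS14II,LL18}, and your identification of the Yang--Baxter verification as the delicate step is accurate. One caveat: your alternative ``path-model route'' through quantum LS paths and Naito--Sagaki is not really an independent shortcut, since the identification of $\A(\Gamma)$ with the quantum LS path crystal in~\cite{LNSSS14II} itself rests on the same Yang--Baxter invariance, so you do not escape the rank-two bookkeeping by going that way.
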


Based on the above discussion and notation, we give a modified crystal structure on $\A(\Gamma)$ such that the result is isomorphic to $\widetilde{DD}_{\ell}(B)$.
Let $\A_\ell(\Gamma)$ be the set $\A(\Gamma)$ with crystal operators defined by
\begin{subequations}
\label{eq:qalcove_crystal_ops}
\begin{align}
	f_p(J)&:= 
	\begin{cases}
		(J \setminus \left\{ m \right\}) \cup \{ k \} & \text{ if $M>\ell\delta_{p,0}$}\,, \\
				\Bzero & \text{ otherwise}\,,
	\end{cases}\\
	e_p(J)&:= 
	\begin{cases}
		(J \setminus \left\{ k \right\}) \cup \{ m \} & \text{ if }
		M>\inner{{\rm wt}(J)}{{\alpha}_p^{\vee}} \text{ and } M \geq \ell\delta_{p,0}\,,  \\
				\Bzero & \text{ otherwise}\,.
	\end{cases}
\end{align}
\end{subequations}
In particular, we have $\A_1(\Gamma) = \A(\Gamma)$. 

\begin{prop}
The map $\Psi$ from Theorem~{\rm \ref{mainconj}} restricts to a crystal isomorphism $\Psi_{\ell} \colon \A_{\ell}(\Gamma) \to \widetilde{DD}_{\ell}(B)$.
\end{prop}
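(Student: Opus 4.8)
The plan is to show that the very same bijection $\Psi$ does the job, after reducing the statement to a comparison of $0$-strings. I would begin with the elementary observation that $\A_{\ell}(\Gamma)$ and $\A_1(\Gamma)=\A(\Gamma)$ have the same underlying set, the same weight function, and the same operators $e_p,f_p$ for every $p\in I_0$ --- since the modification \eqref{eq:qalcove_crystal_ops} only replaces $\delta_{p,0}$ by $\ell\delta_{p,0}$, which is $0$ when $p\neq 0$ --- and, similarly, that $\widetilde{DD}_{\ell}(B)$ and $\widetilde{DD}_1(B)$ have the same vertex set (all of $B$) and the same $i$-edges for every $i\in I_0$, because all such edges are retained. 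Hence, by Theorem~\ref{mainconj}, $\Psi$ is automatically a weight-preserving bijection that intertwines $e_p,f_p$ and preserves $\varepsilon_p,\varphi_p$ for all $p\in I_0$; the only thing left to verify is the behaviour at $p=0$, which is a purely local statement about $0$-strings.

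Next I would analyse the $0$-strings on both sides. For $p=0$, the passage from \eqref{eqn:rootF}--\eqref{eqn:rootE} to \eqref{eq:qalcove_crystal_ops} simply replaces the thresholds ``$M>1$'' and ``$M\geq 1$'' by ``$M>\ell$'' and ``$M\geq \ell$'', where $M=M_{\alpha_0}(J)$ is the maximum of $g_{\alpha_0}$. The key fact --- which follows from Theorem~\ref{mainconj} together with the construction of the $0$-operators (plus a short check at the end of each $0$-string) --- is that $M_{\alpha_0}$ decreases by exactly $1$ along each $0$-arrow of $\A(\Gamma)$; equivalently, $M_{\alpha_0}(J)$ equals the number of $0$-arrows lying above $\Psi(J)$ in the \emph{full} $0$-string of $B$ through $\Psi(J)$. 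Since $f_0$ acts on $J$ exactly when $M_{\alpha_0}(J)>1$, this forces $M_{\alpha_0}$ to run through $k+1,k,\dotsc,1$ along any $0$-string $J_0\xrightarrow{\,0\,}J_1\xrightarrow{\,0\,}\cdots\xrightarrow{\,0\,}J_k$ of $\A(\Gamma)$; therefore raising the threshold from $1$ to $\ell$ deletes precisely the arrows emanating from the vertices $J_j$ with $M_{\alpha_0}(J_j)\leq \ell$, i.e.\ the top $\min(k,\ell-1)$ arrows of the string, while leaving $\varepsilon_0$ and the rest of the string untouched (when $\ell-1\geq k$ the string collapses to isolated vertices).

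On the $B$-side, $\widetilde{DD}_m(B)$ is obtained from $B$ by deleting the last $m$ arrows of every full $0$-string, so $\widetilde{DD}_{\ell}(B)$ deletes from each full $B$-$0$-string exactly $\ell-1$ more arrows, from the top, than $\widetilde{DD}_1(B)$ does. By Theorem~\ref{mainconj}, $\Psi$ already carries each $0$-string of $\A(\Gamma)$ isomorphically onto a $0$-string of $\widetilde{DD}_1(B)$, matching their lengths; therefore trimming the top $\ell-1$ arrows from each $0$-string on both sides --- which is precisely the effect of passing to $\A_{\ell}(\Gamma)$ and to $\widetilde{DD}_{\ell}(B)$, respectively --- is compatible with $\Psi$. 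In particular, whenever $f_0$ is killed in $\A_{\ell}(\Gamma)$ but not in $\A(\Gamma)$, the corresponding $0$-edge of $\widetilde{DD}_1(B)$ is exactly one of the edges removed in $\widetilde{DD}_{\ell}(B)$, and symmetrically the extra restriction on $e_0$ only disconnects the trimmed-off top vertices. It follows that $\Psi$ intertwines $e_0,f_0$ and preserves $\varepsilon_0,\varphi_0$ between $\A_{\ell}(\Gamma)$ and $\widetilde{DD}_{\ell}(B)$, so $\Psi_{\ell}:=\Psi$ is the asserted crystal isomorphism.

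The step I expect to be the main obstacle is the key fact in the second paragraph, that $M_{\alpha_0}(J)$ counts exactly the $0$-arrows above $\Psi(J)$ and hence decreases by $1$ along each $0$-arrow: this is what makes the threshold shift match, term for term, the trimming of the tails of the $0$-strings of $B$. It is the only place where one must unwind the combinatorics of the piecewise-linear functions $g_{\alpha_0}$ underlying the $0$-operators (see \cite{LNSSS14II}); given that fact, the rest of the argument is bookkeeping built on Theorem~\ref{mainconj}.
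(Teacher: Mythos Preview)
Your argument is correct and follows essentially the same route as the paper's proof: reduce to the $0$-strings, and use that the threshold $M$ controls exactly how many $f_0$-arrows remain below a given vertex, so that replacing $1$ by $\ell$ trims precisely the last $\ell-1$ arrows of each $0$-string, matching the passage $\widetilde{DD}_1(B)\to\widetilde{DD}_\ell(B)$. The ``key fact'' you flag as the main obstacle is exactly the identity $\varphi_0(J)=\max(M-1,0)$ already established in~\cite[Theorem~3.9]{LL15}, which the paper simply cites; with that in hand your bookkeeping goes through (modulo your use of ``above/top'' for what is usually the $f_0$-tail of the string).
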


\begin{proof}
It is clear that $\Psi_{\ell}$ is a bijection since, as sets, $\A_{\ell}(\Gamma) = \A(\Gamma)$ and $\widetilde{DD}_{\ell}(B) = \widetilde{DD}_1(B)$. Thus, it remains to show $\Psi_{\ell}$ commutes with the crystal operators.

Assuming $\ell\ge 2$, we have $\widetilde{DD}_{\ell}(B)=\widetilde{DD}_{l-1}(\widetilde{DD}_1(B))$, \textit{i.e.}, the crystal $\widetilde{DD}_{\ell}(B)$ is obtained from $\widetilde{DD}_1(B)$ by removing the last $\ell-1$ edges in a $0$-string. Let $\varphi_0$ be the crystal $\varphi$-function for $\widetilde{DD}_1(B)$, see Section~\ref{sec:crystals}.
By~\cite[Theorem~3.9]{LL15}, we have 
\[
\varphi_0(J)=\max(M-1,0)\,.
\]
In $\A(\Gamma)$ we have to redefine as $0$ every $f_0(J)\ne 0$ with $\varphi_0(J)\le\ell-1$, but this condition is equivalent to $M\le\ell$. Similarly, we have to redefine as $0$ every $e_0(J)\ne 0$ with $\varphi_0(J)\le\ell-2$, but this condition is equivalent to $M<\ell$. The fact that the crystal operators $f_p$ and $e_p$ in $\A_\ell(\Gamma)$ commute with $\Psi_{\ell}$ now follows from~\eqref{eqn:rootF} and~\eqref{eqn:rootE}, respectively. 
\end{proof}

\begin{remarks}
\mbox{}
\begin{itemize}
\item[(1)] Using the setup of Corollary~\ref{cor:red}, let $\lambda = \sum_{j=1}^N s_j \clfw_{r_j}$, and let $\Gamma$ be any $\lambda$-chain. By the mentioned corollary and the above discussion, we can realize $\widetilde{DD}_\ell(B)$ as the connected component of the admissible subset $J=\emptyset$ in $\A_\ell(\Gamma)$.

\item[(2)] In Remark~\ref{rem:red}, we conjectured that Corollary~\ref{cor:red} extends to the exceptional types. As the quantum alcove model applies to single-column KR crystals of any untwisted affine type, we would obtain a uniform model for all (level $\ell$ dual Demazure portions of) tensor products of perfect KR crystals with a fixed level (in the mentioned types).
\end{itemize}
\end{remarks}

The following natural question arises.

\begin{problem}
How are the non-level $\ell$ dual Demazure arrows realized in the quantum alcove model? 
\end{problem}

In the quantum alcove model for tensor products of single column KR crystals, it is expected that the extra $0$-arrows will be slightly more involved. In particular, we see that the $p$-arrows currently given by~\eqref{eqn:rootF} and \eqref{eqn:rootE} change only one element in the admissible subsets. It was observed that the extra $0$-arrows change more than one entry, but there is no precise conjecture currently. On the other hand, for single columns all the 0-arrows are described in the closely related quantum LS path model, which has been bijected to the quantum alcove model~\cite{LNSSS14II}. So it would be interesting to see which of the two models would be better suited for describing the non-level $\ell$ (dual) Demazure arrows.


\section{Conjectures for $Q$-systems}
\label{sec:Q_system}

Recall the $Q$-system relations (we refer the reader to~\cite{KNS11} and references therein):
\begin{subequations}
\label{eq:Q_system}
\begin{align}
\label{eq:untwisted_Q_system}
\left(Q^{(a)}_m\right)^2 & = Q^{(a)}_{m+1} Q^{(a)}_{m-1} + \prod_{b \sim a} \prod_{k=0}^{-A_{ab} - 1} Q^{(b)}_{\left\lfloor \frac{m A_{ba} - k}{A_{ab}} \right\rfloor}\,,
\\ \label{eq:twisted_Q_system}
\left( Q^{(a)}_{m} \right)^2 & = Q^{(a)}_{m+1} Q^{(a)}_{m-1} + \prod_{b \sim a} (Q^{(b)}_{m})^{-A_{ba}}\,,
\end{align}
\end{subequations}
where~\eqref{eq:untwisted_Q_system} is for the untwisted $Q$-system and~\eqref{eq:twisted_Q_system} is for the twisted $Q$-system.

\begin{conj}
\label{conj:Q_system_Demazure}
Fix some $a \in I_0$, and let $c = \min \{ c_b \mid A_{ba} \neq 0 \}$.
Let $\ell \geq \lceil m / c \rceil$, then we have
\begin{subequations}
\begin{align}
\label{eq:untwisted_Q_Demazure}
\widetilde{D}_{\ell}\left( (B^{a,m-1})^{\otimes 2} \right) & \iso \widetilde{D}_{\ell}(B^{a,m} \otimes B^{a,m-2}) \oplus \widetilde{D}_{\ell}\left( \bigotimes_{b \sim a} \bigotimes_{k=0}^{-A_{ab} - 1} B^{b, L(k)-1} \right)\,,
\\ \label{eq:twisted_Q_Demazure}
\widetilde{D}_{\ell}\left( (B^{a,m-1})^{\otimes 2} \right) & \iso \widetilde{D}_{\ell}(B^{a,m} \otimes B^{a,m-2}) \oplus \widetilde{D}_{\ell}\left( \bigotimes_{b \sim a} (B^{b,m-1})^{\otimes -A_{ba}} \right)\,,
\end{align}
\end{subequations}
where \eqref{eq:untwisted_Q_Demazure} is for the untwisted types, \eqref{eq:twisted_Q_Demazure} is for the twisted types, and $L(k) = \left\lfloor \frac{m A_{ba} - k}{A_{ab}} \right\rfloor$.
\end{conj}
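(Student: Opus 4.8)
The plan is to reduce Conjecture~\ref{conj:Q_system_Demazure} to the $Q$-system relations~\eqref{eq:Q_system} for characters of KR crystals, which hold by~\cite{Hernandez10} (together with the fermionic-formula literature). The key device is Theorem~\ref{thm:demazure_decomposition}, which I will use to translate the whole statement into an assertion about the $U_q(\g)$-crystal $B \otimes B(\ell\Lambda_0)$.

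\emph{Step 1: reformulation.} By Lemma~\ref{lemma:demazure_removal}, $\widetilde{D}_{\ell}(B) \iso B \otimes u_{\ell\Lambda_0}$ up to a weight shift, and by Theorem~\ref{thm:demazure_decomposition} together with Proposition~\ref{prop:B_hw_decomp} this is isomorphic to $\bigoplus_{k} B_{w^{(k)}}(\Lambda^{(k)})$, where $\{\Lambda^{(k)}\}$ is \emph{precisely} the multiset of dominant affine weights occurring in the $U_q(\g)$-crystal decomposition $B \otimes B(\ell\Lambda_0) \iso \bigoplus_k B(\Lambda^{(k)})$, and each $w^{(k)}$ is determined by $\Lambda^{(k)}$ (as the minimal-length $w$ with $w\Lambda^{(k)}$ antidominant with respect to $I_0$). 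Hence, as a crystal, $\widetilde{D}_{\ell}(B)$ depends only on the $U_q(\g)$-crystal $B \otimes B(\ell\Lambda_0)$; and since $(\,\cdot\,)\otimes B(\ell\Lambda_0)$ commutes with disjoint unions, both~\eqref{eq:untwisted_Q_Demazure} and~\eqref{eq:twisted_Q_Demazure} become \emph{equivalent} to a $U_q(\g)$-crystal isomorphism $(B^{a,m-1})^{\otimes 2} \otimes B(\ell\Lambda_0) \iso \bigl( (B^{a,m}\otimes B^{a,m-2}) \sqcup \bigotimes_{b\sim a}\bigotimes_{k} B^{b,L(k)-1} \bigr) \otimes B(\ell\Lambda_0)$, respectively its twisted analogue; equivalently, the two sides have the same multiset $\{\Lambda^{(k)}\}$.

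\emph{Step 2: the multiplicities.} The multiplicity of $B(\Lambda)$ in $B \otimes B(\ell\Lambda_0)$ is, by the energy/Demazure dictionary (cf.\ Theorem~\ref{thm:demazure_embedding} and Remark~\ref{rem:grading_energy}), the fermionic (``$M$'') quantity attached to $B$; these are governed by the $Q$-system~\eqref{eq:Q_system}~\cite{Hernandez10,KNS11}. Thus the desired equality of multiset spectra $\{\Lambda^{(k)}\}$ is the image, under this dictionary, of the corresponding $Q$-system relation $(Q^{(a)}_{m})^2 = Q^{(a)}_{m+1}Q^{(a)}_{m-1} + \dotsb$, under the reindexing $Q^{(c)}_j \leftrightarrow B^{c,j-1}$ used in the statement (so $Q^{(c)}_0 = 1$). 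Here one uses that the hypothesis $\ell \geq \lceil m/c\rceil$ with $c = \min\{c_b \mid A_{ba}\neq 0\}$ forces every KR factor on either side to have level bounded by $\ell$ --- immediate from $c \leq c_a$ for the factors at node $a$, and for the factors $B^{b,L(k)-1}$, $B^{b,m-1}$ with $b\sim a$ obtained from $c \leq c_b$ and the identity relating $c_b A_{ba}$ to $c_a A_{ab}$, which bounds $L(k)/c_b$ --- so that these finite-dimensional inputs and their decompositions behave without truncation. Feeding the resulting equality of $\{\Lambda^{(k)}\}$ back through Step~1 then yields the stated crystal isomorphisms.

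\emph{Main obstacle.} The crux is the precise form of Step~2: identifying the multiplicities of $B(\Lambda)$ in $B \otimes B(\ell\Lambda_0)$, for $B$ a tensor product of KR crystals of level bounded by $\ell$, with the $Q$-system-governed quantities, and checking that the $Q$-system relation for KR modules transports --- level-bound by level-bound, and keeping track of the energy constants of Remark~\ref{rem:grading_energy} --- into the sought crystal isomorphism. In nonexceptional types all the ingredients (existence of KR crystals, their classical characters and fermionic formulas, and Theorem~\ref{thm:demazure_decomposition}) are available, so the remaining work is essentially this transport plus the index/level bookkeeping above. In exceptional types one must additionally assume the extensions of Theorems~\ref{thm:demazure_decomposition} and~\ref{thm:red} conjectured in Section~\ref{sec:results}, together with the existence of KR crystals carrying the predicted classical characters; this is why~\eqref{eq:untwisted_Q_Demazure}--\eqref{eq:twisted_Q_Demazure} are stated as a conjecture.
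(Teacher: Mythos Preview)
The paper does not prove this statement: it is stated as a conjecture, supported only by the observation that branching to $U_q(\g_0)$ recovers Hernandez's $Q$-system result, by Theorem~\ref{thm:red} for the components containing the extremal elements, and by computer checks. The paper explicitly identifies the missing step: ``one would need to precisely enumerate \emph{all} connected components and check their maximal and minimal weights''.

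Your Step~1 contains the essential gap, and it is exactly the one the paper flags. You assert that ``each $w^{(k)}$ is determined by $\Lambda^{(k)}$ (as the minimal-length $w$ with $w\Lambda^{(k)}$ antidominant with respect to $I_0$)''. Theorem~\ref{thm:demazure_decomposition} does \emph{not} say this. It says that $w^{(k)}$ is the minimal-length element taking $\Lambda^{(k)}$ to the \emph{specific} weight $\lambda^{(k)}+\ell\Lambda_0$, where $\lambda^{(k)}$ is the classically minimal weight occurring in that particular connected component of $B\otimes u_{\ell\Lambda_0}$. In the affine Weyl group there are infinitely many $w$ with $w\Lambda^{(k)}$ classically antidominant, corresponding to different $\lambda^{(k)}$; nothing in the theorem forces the one arising from $B$ to be the globally shortest. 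Thus $\widetilde{D}_\ell(B)$ is \emph{not} known to depend only on the multiset $\{\Lambda^{(k)}\}$; one must also control the multiset $\{\lambda^{(k)}\}$ (equivalently, the $w^{(k)}$). Matching the $\Lambda^{(k)}$'s on both sides of~\eqref{eq:untwisted_Q_Demazure}--\eqref{eq:twisted_Q_Demazure} is therefore not sufficient to conclude the crystal isomorphism.

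Your Step~2 is also weaker than you need even for the $\Lambda^{(k)}$'s alone. The $Q$-system relations of~\cite{Hernandez10} are identities of \emph{classical} characters, whereas the decomposition $B\otimes B(\ell\Lambda_0)\iso\bigoplus_k B(\Lambda^{(k)})$ is sensitive to the $\delta$-grading (equivalently, to energy, cf.\ Remark~\ref{rem:grading_energy}). To match the affine multisets you would need a graded/energy-refined $Q$-system identity, together with control of the constants $C_k$ in Remark~\ref{rem:grading_energy} on both sides; you allude to this but do not supply it. In short, your outline correctly locates the problem in the Demazure/KR dictionary, but the reduction you propose collapses precisely the data (the $\lambda^{(k)}$'s and the $\delta$-grading) that the conjecture is about.
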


Note that it is sufficient to prove the case when $\ell \geq \lceil m / c \rceil$. 

Conjecture~\ref{conj:Q_system_Demazure} is a crystal theoretic interpretation of~\eqref{eq:Q_system} (with renormalized indices). We note that if we branch to $U_q(\g_0)$-crystals (or if we took $\ell \gg 1$), then Conjecture~\ref{conj:Q_system_Demazure} becomes precisely the statement that classical characters of KR crystals satisfy the $Q$-system~\cite{Hernandez10}. Thus, Conjecture~\ref{conj:Q_system_Demazure} is a strengthening of~\cite{Hernandez10}.

Theorem~\ref{thm:demazure_decomposition} says that the crystals in Conjecture~\ref{conj:Q_system_Demazure} are isomorphic to a disjoint union of Demazure crystals. Theorem~\ref{thm:red} implies that the components containing the maximal/minimal elements are isomorphic. However, one would need to precisely enumerate \emph{all} connected components and check their maximal and minimal weights in order to show Conjecture~\ref{conj:Q_system_Demazure}. We have verified Conjecture~\ref{conj:Q_system_Demazure} on a number of examples in different types by using \textsc{SageMath}~\cite{sage}.

\bibliographystyle{alpha}

\begin{thebibliography}{KKM{\etalchar{+}}92b}

\bibitem[Bou02]{Bourbaki02}
N. Bourbaki.
\newblock {\em Lie groups and {L}ie algebras. {C}hapters 4--6}.
\newblock Elements of Mathematics (Berlin). Springer-Verlag, Berlin, 2002.
\newblock Translated from the 1968 French original by Andrew Pressley.

\bibitem[BS17]{BS17}
D. Bump and A. Schilling.
\newblock {\em Crystal bases}.
\newblock World Scientific Publishing Co. Pte. Ltd., Hackensack, NJ, 2017.
\newblock Representations and combinatorics.

\bibitem[BS19]{BS19II}
R. Biswal and T. Scrimshaw
\newblock Existence of {K}irillov-{R}eshetikhin crystals for multiplicity free nodes.
\newblock Preprint, \arxiv{1902.00769}, 2019.

\bibitem[Car05]{Carter05}
R.~W. Carter.
\newblock {\em Lie algebras of finite and affine type}, volume~96 of {\em
  Cambridge Studies in Advanced Mathematics}.
\newblock Cambridge University Press, Cambridge, 2005.

\bibitem[CP95]{CP95}
V. Chari and A. Pressley.
\newblock Quantum affine algebras and their representations.
\newblock In {\em Representations of groups ({B}anff, {AB}, 1994)}, volume~16
  of {\em CMS Conf. Proc.}, pages 59--78. Amer. Math. Soc., Providence, RI,
  1995.

\bibitem[CP98]{CP98}
V. Chari and A. Pressley.
\newblock Twisted quantum affine algebras.
\newblock {\em Comm. Math. Phys.}, 196:461--476, 1998.

\bibitem[FL06]{FL06}
G.~Fourier and P.~Littelmann.
\newblock Tensor product structure of affine {D}emazure modules and limit
  constructions.
\newblock {\em Nagoya Math. J.}, 182:171--198, 2006.

\bibitem[FL07]{FL07}
G.~Fourier and P.~Littelmann.
\newblock Weyl modules, {D}emazure modules, {KR}-modules, crystals, fusion
  products and limit constructions.
\newblock {\em Adv. Math.}, 211:566--593, 2007.

\bibitem[FOS09]{FOS09}
G. Fourier, M. Okado, and A. Schilling.
\newblock Kirillov-{R}eshetikhin crystals for nonexceptional types.
\newblock {\em Adv. Math.}, 222:1080--1116, 2009.

\bibitem[FOS10]{FOS10}
G. Fourier, M. Okado, and A. Schilling.
\newblock Perfectness of {K}irillov-{R}eshetikhin crystals for nonexceptional
  types.
\newblock {\em Contemp. Math.}, 506:127--143, 2010.

\bibitem[FSS07]{FSS07}
G. Fourier, A. Schilling, and M. Shimozono.
\newblock Demazure structure inside {K}irillov-{R}eshetikhin crystals.
\newblock {\em J. Algebra}, 309:386--404, 2007.

\bibitem[FW04]{FW04}
W. Fulton and C. Woodward.
\newblock On the quantum product of Schubert classes.
\newblock {\em J. Algebraic Geom.}, 13:641--661, 2004.

\bibitem[Her10]{Hernandez10}
D. Hernandez.
\newblock Kirillov-{R}eshetikhin conjecture: the general case.
\newblock {\em Int. Math. Res. Not.}, (1):149--193, 2010.

\bibitem[HKO{\etalchar{+}}99]{HKOTY99}
G. Hatayama, A.~Kuniba, M.~Okado, T.~Takagi, and Y.~Yamada.
\newblock Remarks on fermionic formula.
\newblock In {\em Recent developments in quantum affine algebras and related
  topics ({R}aleigh, {NC}, 1998)}, volume 248 of {\em Contemp. Math.}, pages
  243--291. Amer. Math. Soc., Providence, RI, 1999.

\bibitem[HKO{\etalchar{+}}02]{HKOTT02}
G. Hatayama, A. Kuniba, M. Okado, T. Takagi, and Z. Tsuboi.
\newblock Paths, crystals and fermionic formulae.
\newblock In {\em Math{P}hys Odyssey, 2001}, volume~23 of {\em Prog. Math.
  Phys.}, pages 205--272. Birkh\"auser Boston, Boston, MA, 2002.

\bibitem[HN06]{HN06}
D. Hernandez and H. Nakajima.
\newblock Level 0 monomial crystals.
\newblock {\em Nagoya Math. J.}, 184:85--153, 2006.

\bibitem[Jos03]{Joseph03}
A. Joseph.
\newblock A decomposition theorem for {D}emazure crystals.
\newblock {\em J. Algebra}, 265:562--578, 2003.

\bibitem[JS10]{JS10}
B. Jones and A. Schilling.
\newblock Affine structures and a tableau model for {$E_6$} crystals.
\newblock {\em J. Algebra}, 324:2512--2542, 2010.

\bibitem[Kac90]{kac90}
V.~G. Kac.
\newblock {\em Infinite-dimensional {L}ie algebras}.
\newblock Cambridge University Press, Cambridge, third edition, 1990.

\bibitem[Kas90]{K90}
M. Kashiwara.
\newblock Crystalizing the {$q$}-analogue of universal enveloping algebras.
\newblock {\em Comm. Math. Phys.}, 133:249--260, 1990.

\bibitem[Kas91]{K91}
M. Kashiwara.
\newblock On crystal bases of the $q$-analogue of universal enveloping
  algebras.
\newblock {\em Duke Math. J.}, 63:465--516, 1991.

\bibitem[Kas94]{K94}
M.~Kashiwara.
\newblock Crystal bases of modified quantized enveloping algebra.
\newblock {\em Duke Math. J.}, 73:383--413, 1994.

\bibitem[Kas96]{K96}
M. Kashiwara.
\newblock Similarity of crystal bases.
\newblock In {\em Lie algebras and their representations ({S}eoul, 1995)},
  volume 194 of {\em Contemp. Math.}, pages 177--186. Amer. Math. Soc.,
  Providence, RI, 1996.

\bibitem[Kas02]{K02}
M. Kashiwara.
\newblock On level-zero representations of quantized affine algebras
\newblock {\em Duke Math. J.}, 112(1):117--175, 2002.

\bibitem[Kas05]{K05}
M. Kashiwara.
\newblock Level zero fundamental representations over quantized affine algebras
  and {D}emazure modules.
\newblock {\em Publ. Res. Inst. Math. Sci.}, 41:223--250, 2005.

\bibitem[KKM{\etalchar{+}}92a]{KKMMNN91}
S.-J. Kang, M. Kashiwara, K.~C. Misra, T. Miwa, T.
  Nakashima, and A. Nakayashiki.
\newblock Affine crystals and vertex models.
\newblock In {\em Infinite analysis, {P}art {A}, {B} ({K}yoto, 1991)},
  volume~16 of {\em Adv. Ser. Math. Phys.}, pages 449--484. World Sci. Publ.,
  River Edge, NJ, 1992.

\bibitem[KKM{\etalchar{+}}92b]{KKMMNN92}
S.-J. Kang, M. Kashiwara, K.~C. Misra, T. Miwa, T.
  Nakashima, and A. Nakayashiki.
\newblock Perfect crystals of quantum affine {L}ie algebras.
\newblock {\em Duke Math. J.}, 68:499--607, 1992.

\bibitem[Kle98]{Kleber98}
M. Kleber.
\newblock {\em Finite dimensional representations of quantum affine algebras}.
\newblock ProQuest LLC, Ann Arbor, MI, 1998.
\newblock Thesis (Ph.D.)--University of California, Berkeley.

\bibitem[KMOY07]{KMOY07}
M.~Kashiwara, K.~C. Misra, M.~Okado, and D.~Yamada.
\newblock Perfect crystals for {$U_q(D^{(3)}_4)$}.
\newblock {\em J. Algebra}, 317:392--423, 2007.

\bibitem[KNS11]{KNS11}
A. Kuniba, T. Nakanishi, and J. Suzuki.
\newblock {$T$}-systems and {$Y$}-systems in integrable systems.
\newblock {\em J. Phys. A}, 44:103001, 146, 2011.

\bibitem[LL15]{LL15}
C. Lenart and A. Lubovsky.
\newblock A generalization of the alcove model and its applications.
\newblock {\em J. Algebraic Combin.}, 41:751--783, 2015.

\bibitem[LL18]{LL18}
\newblock  C.~Lenart and A. Lubovsky.
\newblock A uniform realization of the combinatorial $R$-matrix.
\newblock {\em Adv. Math.}, 334:151--183, 2018.

\bibitem[LLM02]{LLM02}
V. Lakshmibai, P. Littelmann, and P. Magyar.
\newblock Standard monomial theory for {B}ott-{S}amelson varieties.
\newblock {\em Compos. Math.}, 130:293--318, 2002.

\bibitem[LNS{\etalchar{+}}15]{LNSSS14}
C. Lenart, S. Naito, D. Sagaki, A. Schilling, and M. Shimozono.
\newblock A uniform model for {K}irillov-{R}eshetikhin crystals {I}: {L}ifting
  the parabolic quantum {B}ruhat graph.
\newblock {\em Int. Math. Res. Not.}, (7):1848--1901, 2015.

\bibitem[LNS{\etalchar{+}}17]{LNSSS14II}
C. Lenart, S. Naito, D. Sagaki, A. Schilling, and M. Shimozono.
\newblock A uniform model for {K}irillov-{R}eshetikhin crystals {II}. {A}lcove
  model, path model, and {$P=X$}.
\newblock {\em Int. Math. Res. Not.}, (14):4259--4319, 2017.

\bibitem[LNS{\etalchar{+}}17]{LNSSS15}
C. Lenart, S. Naito, D. Sagaki, A. Schilling, and M. Shimozono.
\newblock A uniform model for {K}irillov-{R}eshetikhin crystals {III}:
  {N}onsymmetric {M}acdonald polynomials at {$t=0$} and {D}emazure characters.
\newblock {\em Transform. Groups}, 22:1041--1079, 2017.

\bibitem[LS19]{LS17}
X. Liu and T. Scrimshaw.
\newblock A uniform approach to soliton cellular automata using rigged
  configurations.
\newblock {\em Ann. Henri Poincar\'e}, 20(4):1175--1215, 2019.

\bibitem[Nao12]{Naoi12}
K. Naoi.
\newblock Weyl modules, {D}emazure modules and finite crystals for non-simply laced type.
\newblock {\em Adv. Math.}, 231:1546--1571, 2012.

\bibitem[Nao13]{Naoi13}
K. Naoi.
\newblock Demazure crystals and tensor products of perfect
  {K}irillov-{R}eshetikhin crystals with various levels.
\newblock {\em J. Algebra}, 374:1--26, 2013.

\bibitem[Nao17]{Naoi17}
K. Naoi.
\newblock Existence of {K}irillov-{R}eshetikhin crystals of type {$G_2^{(1)}$}
  and {$D_4^{(3)}$}.
\newblock {\em J. Algebra} 512:47--65, 2018.

\bibitem[Nao19]{Naoi19}
K. Naoi.
\newblock Existence of Kirillov-Reshetikhin crystals of type {$E_6^{(1)}$}.
\newblock Preprint, \arxiv{1903.11681}, 2019.

\bibitem[NS08a]{NS08II}
S. Naito and D. Sagaki.
\newblock Crystal structure on the set of {L}akshmibai-{S}eshadri paths of an
  arbitrary level-zero shape.
\newblock {\em Proc. Lond. Math. Soc.}, 96:582--622, 2008.

\bibitem[NS08b]{NS08}
S. Naito and D. Sagaki.
\newblock Lakshmibai-{S}eshadri paths of level-zero shape and one-dimensional
  sums associated to level-zero fundamental representations.
\newblock {\em Compos. Math.}, 144:1525--1556, 2008.

\bibitem[Nom16]{Nom16}
F. Nomoto. 
\newblock Quantum Lakshmibai-Seshadri paths and the specialization of Macdonald 
polynomials at $t=0$ in type $A_{2n}^{(2)}$.
\newblock Preprint, \arxiv{1606.01067}, 2016.

\bibitem[Oka13]{Okado13}
M. Okado.
\newblock Simplicity and similarity of {K}irillov-{R}eshetikhin crystals.
\newblock In {\em Recent developments in algebraic and combinatorial aspects of
  representation theory}, volume 602 of {\em Contemp. Math.}, pages 183--194.
  Amer. Math. Soc., Providence, RI, 2013.

\bibitem[OS08]{OS08}
M. Okado and A. Schilling.
\newblock Existence of {K}irillov-{R}eshetikhin crystals for nonexceptional
  types.
\newblock {\em Represent. Theory}, 12:186--207, 2008.

\bibitem[OSS03]{OSS03II}
M. Okado, A. Schilling, and M. Shimozono.
\newblock Virtual crystals and {K}leber's algorithm.
\newblock {\em Comm. Math. Phys.}, 238:187--209, 2003.

\bibitem[OSS13]{OSS13}
M. Okado, R. Sakamoto, and A. Schilling.
\newblock Affine crystal structure on rigged configurations of type {$D_n^{(1)}$}
\newblock {\em J. Algebraic Combin.}, 37:571--599, 2013.

\bibitem[OSS18]{OSS17}
M. Okado, A. Schilling, and T. Scrimshaw.
\newblock Rigged configuration bijection and proof of the {$X = M$} conjecture
  for nonexceptional affine types.
\newblock {\em J. Algebra}, 516:1--37, 2018.

\bibitem[PS18]{PS15}
J. Pan and T. Scrimshaw.
\newblock Virtualization map for the {L}ittelmann path model.
\newblock {\em Transform. Groups}, 23(4):1045--1061, 2018.

\bibitem[Sage17]{sage}
The~Sage Developers.
\newblock {\em {S}age {M}athematics {S}oftware ({V}ersion 8.3)}.
\newblock The Sage Development Team, 2018.
\newblock \url{http://www.sagemath.org}.

\bibitem[SCc08]{combinat}
The {S}age-{C}ombinat community.
\newblock {S}age-{C}ombinat: enhancing {S}age as a toolbox for computer
  exploration in algebraic combinatorics, 2008.
\newblock \url{http://combinat.sagemath.org}.

\bibitem[Sch06]{S06}
A. Schilling.
\newblock Crystal structure on rigged configurations.
\newblock {\em Int. Math. Res. Not.}, pages Art. ID 97376, 27, 2006.

\bibitem[SS15]{SchillingS15}
A. Schilling and T. Scrimshaw.
\newblock Crystal structure on rigged configurations and the filling map for
  non-exceptional affine types.
\newblock {\em Electron. J. Combin.}, 22:Research Paper 73, 56, 2015.

\bibitem[ST12]{ST12}
A. Schilling and P. Tingley.
\newblock Demazure crystals, {K}irillov-{R}eshetikhin crystals, and the energy
  function.
\newblock {\em Electron. J. Combin.}, 19:Paper 4, 42, 2012.

\bibitem[SW10]{SW10}
A. Schilling and Q. Wang.
\newblock Promotion operator on rigged configurations of type {$A$}.
\newblock {\em Electron. J. Combin.}, 17:Research Paper 24, 43, 2010.

\bibitem[Yam98]{Yamane98}
S. Yamane.
\newblock Perfect crystals of {$U_q(G^{(1)}_2)$}.
\newblock {\em J. Algebra}, 210:440--486, 1998.

\end{thebibliography}
\newcommand{\etalchar}[1]{$^{#1}$}

\end{document}